 \newtheorem{theorem}{Theorem}[section]
 \newtheorem*{maintheorem}{Main Theorem}
    \newtheorem{corollary}[theorem]{Corollary}
   \newtheorem{lemma}[theorem]{Lemma}
    \newtheorem{proposition}[theorem]{Proposition}
    \theoremstyle{definition}
\newtheorem{definition}[theorem]{Definition}
\newtheorem{example}[theorem]{Example}
  \newcommand{\Z}{\ensuremath{{\mathbb{Z}}}}
\newcommand{\R}{\ensuremath{{\mathbb{R}}}}
\newcommand{\E}{\ensuremath{{\mathbb{E}}}}
  \newcommand{\Pj}{\ensuremath{{\mathbb{P}}}}
  \newcommand{\G}{\Gamma}
 \newcommand{\pitwo}{\frac{\pi}{2}}
\newcommand{\AG}{A_\G}            
\title{ Length functions of 2-dimensional right-angled Artin groups}
\author{Ruth Charney and Max Margolis}
\thanks {R. Charney was partially supported by NSF grant DMS 0705396}
\begin{document}

\begin{abstract}  Morgan and Culler proved that a minimal action of a free group on a tree is determined by its translation length function. We prove an analogue of this theorem for two-dimensional right-angled Artin groups acting on CAT(0) rectangle complexes.
\end{abstract}

\maketitle

\section{Introduction}

Let $G$ be a finitely generated group and $G \times X \to X$ an isometric action of $G$ on a metric space $X$.  The length function of the action is the function $G \to [0, \infty)$ defined by $l(g) = \inf \{ d(x, gx)  \mid  x \in X \}$. In \cite{CuMo},  Culler and Morgan study length functions of groups acting (minimally and semi-simply) on $\R$-trees.  
They prove that such actions are determined up to equivariant isometry by their length functions.  This implies that the space of all such actions, modulo scaling, embeds in an infinite dimensional projective space $\Pj^\infty$.

In the case of a free group, $G=F_n$, this theorem has important applications.  The space of  actions of $F_n$ on a simplicial tree, up to scaling, is equivalent to the space of marked graphs introduced by Culler and Vogtmann in \cite{CuVo} in their study of automorphism groups of free groups.  This space, which we denote by  $CV_n$, is commonly known as Outer Space.     By Culler-Morgan, $CV_n$  embeds in $\Pj^\infty$.  They also prove that the image of this embedding lies in a compact subset of $\Pj^\infty$, so its closure is a compactification of $CV_n$, and points on the boundary can be described as ``very small" actions of $F_n$ on an $\R$-tree \cite{CoLu}.   This has provided an essential tool in the study of automorphism groups of free groups.

In this paper we prove a two-dimensional analogue of Culler and Morgan's theorem.  
Associated to a finite, simplicial graph $\G$ with vertex set $V$ is a group $\AG$, called a \emph{right-angled Artin group}, defined by
$$\AG = \langle V \mid \textrm{$v_iv_j=v_jv_i$ if $v_i,v_j$ are adjacent in $\G$} \rangle. $$
At the two extremes, a graph $\G$ with no edges gives rise to a free group, while a complete graph $\G$ gives a free abelian group.
In recent work with K.~Vogtmann, the first author has been studying automorphisms of right-angled Artin groups,  \cite{BCV09, CCV07, CV08, CV11}.  The (outer) automorphism groups $Out(\AG)$ interpolate between $Out(F_n)$ and $GL_n(\Z)$ and provide a context for studying the similarities and differences between these groups.  It would be particularly useful to have a good analogue of Outer Space for right-angled Artin groups, as well as a compactification of this space.  

Consider as an example, the case in which $\AG$ is a product of two free groups, $F_n \times F_m$.  This is the right-angled Artin group associated to the join of two discrete graphs.  A natural candidate for Outer Space for this group is the space whose points are actions of $\AG$ on a product of two trees.  A product of two trees is
a CAT(0) rectangle complex, that is, a piecewise Euclidean CAT(0) space whose cells are rectangles.  
More generally, every right-angled Artin group acts on a rectangle complex, namely its Cayley 2-complex.  If $\G$ has no triangles, then this complex is CAT(0) and the quotient by $\AG$ is a $K(\AG,1)$-space (see \cite{Ch07}).  In this case, we say that $\AG$ is 2-dimensional.   Thus, for 2-dimensional $\AG$ a natural analogue of Culler-Vogtmann's space would be a deformation space of actions on CAT(0) rectangle complexes.  (A different notion of Outer Space for these groups was introduced in \cite{CV08}.)

With this in mind, we are interested in an analogue of Culler and Morgan's theorem for 2-dimensional right-angled Artin groups.  In this paper we prove the following theorem.
\begin{maintheorem} Assume $\G$ has no triangles and no vertices of valence 0.  Let $X$ and $X'$ be 2-dimensional CAT(0) rectangle complexes with minimal actions of $\AG$.  If the length functions associated to the two actions are the same, then $X$ and $X'$ are equivariantly isometric.
 \end{maintheorem}
 
For free groups acting on trees, an action is said to be ``minimal" if there is no invariant subtree, or equivalently, if every edge in the tree lies in the axis of some element.  The minimality condition that appears in the theorem above is an analogue of the latter condition.  See Section \ref{minimal} for a discussion of minimality.

 The special case of the main theorem in which the complexes are required to be regular cube complexes (i.e. all edge lengths equal 1) appears in the second author's thesis \cite{Mar}.


\section{Rectangle complexes}

A cubical complex is a piecewise Euclidean complexes all of whose cells are standard Euclidean cubes $C^k=  [0,1]^k$.  
In this paper, we are interested in piecewise Euclidean complexes whose cells are rectangular, that is, each cell is isometric to a finite product of intervals $\prod [0,a_i]$, but the edge lengths may vary from cell to cell.  We assume, however, that our complex has finite shapes, that is, there are only finitely many different edge lengths.  

Most of the standard properties of cubical complexes hold more generally for rectangle complexes.   In particular, the link of  a vertex in a rectangle complex $Y$ is a
piecewise spherical simplicial complex with all edge lengths $\pitwo$, hence
$Y$ is locally CAT(0) if and only if the link of every vertex is flag  (i.e., any set of pairwise adjacent vertices in the link spans a simplex).   In the case of a 2-dimensional rectangle complex, the link is just a graph and the flag condition is equivalent to the statement  that this graph has no 3-cycles. 
We can also define \emph{walls} in $Y$, as for cubical complexes, as equivalence classes of midplanes of rectangles.  Walls are geodesic and separate $Y$ into two components.  A geodesic in $Y$ crosses each wall at most once and a geodesic which intersects a wall in a non-trivial segment must lie totally inside the wall.

\begin{definition}  Let $C_1, C_2$ be two convex subcomplexes of a CAT(0) rectangle complex $Y$.  A \emph{spanning geodesic} from $C_1$ to $C_2$ is a geodesic of minimal length, i.e., whose length is equal to the distance from $C_1$ to $C_2$.
\end{definition}

If $Y$ has finite shapes, then such a spanning geodesic always exists since the set of distances between rectangles in $Y$ is discrete. 

\begin{lemma}  \label{angles}
Let $Y$ be a CAT(0) rectangle complex with finite shapes and let $C_1, C_2$ be convex sub-complexes.
\begin{enumerate}
\item If $\alpha$ is a geodesic in $Y$ from $y_1 \in C_1$ to $y_2 \in C_2$ then $\alpha$ is a spanning geodesic if and only if the angle at $y_i$ between $\alpha$ and $C_i$ is  $\geq \frac{\pi}{2}$, for $i=1,2$.
\item There exists a spanning geodesic from $C_1$ to $C_2$ whose endpoints are vertices of $Y$.
\end{enumerate}
\end{lemma}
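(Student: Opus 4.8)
The plan is to prove both statements with standard CAT(0) variational and convexity arguments, and then to exploit the product structure of the cells for the second statement.

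For the forward implication of part (1), I would argue by contradiction. Suppose $\alpha$ is a spanning geodesic but the angle at, say, $y_1$ between $\alpha$ and $C_1$ is strictly less than $\frac{\pi}{2}$. Then there is a direction $w$ in the link of $y_1$ pointing into $C_1$ with $\angle(w,\alpha) < \frac{\pi}{2}$, and since $C_1$ is a convex subcomplex there is a geodesic $\gamma$ inside $C_1$ issuing from $y_1$ in the direction $w$. By the first variation formula for CAT(0) spaces, $\frac{d}{dt}\big|_{0^+} d(\gamma(t),y_2) = -\cos\angle(w,\alpha) < 0$, so moving $y_1$ slightly along $\gamma$ strictly decreases the distance to $y_2$, contradicting minimality of $|\alpha|$. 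Hence both angles are $\geq \frac{\pi}{2}$. For the reverse implication I would pass to the product: $C_1 \times C_2$ is convex in the CAT(0) space $Y \times Y$, and $F(x_1,x_2) = d(x_1,x_2)$ is a convex function on it with infimum $d(C_1,C_2)$. Given a geodesic $\alpha$ meeting $C_1$ and $C_2$ at angles $\geq \frac{\pi}{2}$, for any $z_i \in C_i$ and geodesics $\gamma_i \subseteq C_i$ from $y_i$ to $z_i$ the first variation formula gives
$$\tfrac{d}{dt}\big|_{0^+} d(\gamma_1(t),\gamma_2(t)) = -\cos\angle_{y_1}(\gamma_1,\alpha) - \cos\angle_{y_2}(\gamma_2,\alpha) \geq 0,$$
since both angles are $\geq \frac{\pi}{2}$. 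As $t \mapsto d(\gamma_1(t),\gamma_2(t))$ is convex with non-negative right derivative at $0$, it is non-decreasing, so $d(z_1,z_2) \geq d(y_1,y_2)$; since $z_1,z_2$ are arbitrary, $|\alpha| = d(C_1,C_2)$. (Equivalently, the angle conditions say precisely that $y_1 = \pi_{C_1}(y_2)$ and $y_2 = \pi_{C_2}(y_1)$ are mutual nearest-point projections.)

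For part (2), I would begin with any spanning geodesic $\alpha$ from $y_1$ to $y_2$ (one exists by the discreteness of distances noted above) and use part (1) to upgrade the angle condition to orthogonality. Let $\sigma_1 \subseteq C_1$ be the smallest cell containing $y_1$; its tangent directions at $y_1$ form a round great sphere $S$ in the link, all pointing into $C_1$. Writing the link as the spherical join $S * \mathrm{lk}(\sigma_1)$ and applying the join formula for angles, the condition $\angle(u,w) \geq \frac{\pi}{2}$ between the initial direction $u$ of $\alpha$ and every $w \in S$ forces $u \in \mathrm{lk}(\sigma_1)$; that is, $\alpha$ leaves $y_1$ orthogonally to $\sigma_1$, and symmetrically it is orthogonal to the support cell $\sigma_2$ of $y_2$. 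Because $\alpha$ is orthogonal to $\sigma_1$, the local product structure $\sigma_1 \times \rho$ of the cells it traverses lets me translate $\alpha$ parallel to itself across $\sigma_1$, producing a family of geodesics of the same length $d(C_1,C_2)$ starting at the various points of $\sigma_1$. I would then push the near endpoint to a vertex of $\sigma_1$ and the far endpoint to a vertex of $\sigma_2$, obtaining a geodesic of length $d(C_1,C_2)$ between two vertices, which is spanning by definition.

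The step I expect to be the main obstacle is controlling the two endpoints simultaneously in part (2): translating $\alpha$ across $\sigma_1$ moves the far endpoint, and one must verify it stays in $C_2$. I would resolve this by showing that the min-set $Y_1 = \{x \in C_1 : d(x,C_2) = d(C_1,C_2)\}$ is a subcomplex — the orthogonality established above shows that whenever $Y_1$ meets the relative interior of a cell it contains the whole cell, via the local product structure — together with the standard CAT(0) fact that nearest-point projection restricts to an isometry of $Y_1$ onto the corresponding min-set $Y_2 \subseteq C_2$, carrying vertices to vertices. A nonempty subcomplex contains a vertex $v_1$, and its image $\pi_{C_2}(v_1)$ is then a vertex of $Y_2$, so $[v_1,\pi_{C_2}(v_1)]$ is the desired spanning geodesic between vertices.
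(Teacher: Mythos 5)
Your part (1) is correct but follows a genuinely different route from the paper's. The forward direction (corner-cutting / first variation) is the same. For the converse, the paper avoids variational calculus entirely: it takes an honest spanning geodesic $\gamma$ (which exists by finite shapes), notes that the quadrilateral formed by $y_1,y_2$ and the endpoints of $\gamma$ has all four angles $\geq \frac{\pi}{2}$ (two by hypothesis, two by the already-proved forward direction, the connecting sides lying in $C_1$ and $C_2$ by convexity), and applies the Flat Quadrilateral Theorem to conclude this quadrilateral spans a flat rectangle, whence $|\alpha|=|\gamma|$. Your route can be made to work, but be aware that the two-endpoint first variation formula you invoke is \emph{not} a formal consequence of convexity plus the standard one-endpoint formula: a convex function on $[0,1]^2$ can have both right partial derivatives nonnegative at the origin and still decrease along the diagonal (e.g.\ $F(t,s)=|t-s|-\tfrac{1}{2}(t+s)$), and the one-endpoint formula only controls the partials of $F(t,s)=d(\gamma_1(t),\gamma_2(s))$. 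So you must actually prove
\[
\liminf_{t\to 0^+}\tfrac{1}{t}\bigl(d(\gamma_1(t),\gamma_2(t))-d(y_1,y_2)\bigr)\;\geq\; -\cos\theta_1-\cos\theta_2 ,
\]
say via the CAT(0) law of cosines for hinges together with the fact that $\angle_{y_2}\bigl(y_1,\gamma_1(t)\bigr)\to 0$ as $t\to 0$. This is true, but it is real work that the paper's quadrilateral argument packages for free.

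In part (2) there is a genuine gap, and it sits exactly at the step you flagged. Both your primary plan (``translate $\alpha$ parallel to itself across $\sigma_1$'') and your fallback (showing the min-set $Y_1$ is a subcomplex ``via the local product structure'') rest on the same unproved global statement: that the \emph{entire} geodesic $\alpha$, not merely its initial segment, can be transported parallel across $\sigma_1$, with the transported geodesics still terminating in $C_2$. The product structure $\sigma_1\times\rho$ is a property of the single cell containing the initial segment of $\alpha$; it says nothing about the later cells $\alpha$ traverses, so quoting it justifies neither claim --- the fallback simply restates the obstacle. The missing ingredient, which is the heart of the paper's proof, is the wall argument: since the initial segment of $\alpha$ is parallel to a midplane of the cell it enters, $\alpha$ lies for its whole length in the carrier $N(W)=W\times[0,r]$ of the corresponding wall $W$, at constant height in the $[0,r]$-coordinate (carriers are convex and split metrically as products, so the $[0,r]$-coordinate of a geodesic in $N(W)$ is affine, hence constant when its initial derivative vanishes). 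Only with this in hand can one translate $\alpha$ in the $[0,r]$-direction --- the far endpoint then moves inside the support cell of $y_2$, which lies in $C_2$ because $C_2$ is a subcomplex --- or, as the paper does instead, project $\alpha$ orthogonally onto $W$ and induct on dimension, $W$ being a CAT(0) rectangle complex of one dimension lower. Some such argument is unavoidable: the assertion that the nearest-point set is a subcomplex genuinely uses the rectangular structure and fails in general piecewise Euclidean CAT(0) complexes.

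A smaller but related issue: even granting that $Y_1$ is a subcomplex, your final step asserts that the isometry $\pi_{C_2}|_{Y_1}:Y_1\to Y_2$ ``carries vertices to vertices.'' An isometry between these sets has no a priori reason to be cellular, so this needs proof. For instance, once the wall/transport argument is available, run it from the $C_2$ side starting with the spanning geodesic from a vertex $v_1\in Y_1$: if its endpoint $\pi_{C_2}(v_1)$ were not a vertex, transporting across its support cell would sweep out a positive-dimensional cell through $v_1$, contradicting that $v_1$ is a vertex.
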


\begin{proof}  (1) Let $\alpha$ be as above. If the angle at one endpoint, say $y_1$, between $\alpha$ and $C_1$ is  $< \frac{\pi}{2}$, then we can ``cut off the corner" near $y_1$ to get a shorter path from $C_1$ to $C_2$, so $\alpha$ is not a spanning geodesic.
Conversely, suppose the angles at both endpoints are $\geq  \frac{\pi}{2}$.  Let $\gamma$ be a spanning geodesic with endpoints $y_3,y_4$.  Then $y_1,y_2,y_3,y_3$ span a quadrilateral in $Y$ with all angles $\geq  \frac{\pi}{2}$.  By the Flat Quadrilateral Theorem (\cite{BH} p. 181), this quadrilateral spans a Euclidean rectangle.  In particular, the opposite sides $\alpha$ and $\gamma$ of this rectangle have the same length.

(2) Let $\gamma$ be any spanning geodesic from $C_1$ to $C_2$ with endpoints $y_1,y_2$. We proceed by induction on the dimension of $Y$.  If $\dim (Y)=1$ the lemma is clear.  Suppose  $\dim (Y)>1$.  Let $\sigma$ be the smallest face of $Y$ containing the initial segment of  $\gamma$.  If $y_1$ is not a vertex, then $\gamma$ is orthogonal to $\sigma \cap C_1$ , so the initial segment of $\gamma$ is parallel to some midplane of $\sigma$.  It follows that all of $\gamma$ remains parallel to the wall $W$ containing this midplane and lies in the cellular neighborhood $N(W)$ of this wall (where $N(W)$ is the union of the rectangles with a midplane in $W$).  Note that all edges of $X$ orthogonal to $W$ must have the same length $r$ since parallel edges in a cube have the same length, so 
$N(W) = W \times [0,r]$.   
Projecting $\gamma$ orthogonally onto  $W$, gives another spanning geodesic, so we may assume that, in fact, $\gamma$ lies in $W$.  

The wall $W$ inherits the structure of a CAT(0) rectangle complex of one dimension lower than $X$.  By induction, there is a spanning geodesic $\alpha$ in $W$ from $C_1 \cap W$ to $C_2 \cap W$ which begins and ends at a vertex of $W$.  Vertices of $W$ correspond to edges of $X$ which meet $W$ orthogonally.  The projection $N(W) = W \times [0,r] \to W \times 0$ takes $\alpha$ to a path of the same length which begins and ends at a vertex of $X$.  This is the desired spanning geodesic.  
\end{proof}

\begin{lemma}\label{spanning}  Let $Y$ be as above and let $C_1, C_2, C_3$ be convex sub-complexes.  Suppose $C_1$ and  $C_2$ each intersect $C_3$ in non-empty sets, but  $C_1 \cap C_2 \cap C_3 = \emptyset$.  Then any minimal length geodesic in $C_3$ from $C_1 \cap C_3$ to $C_2 \cap C_3$ is a spanning geodesic for $C_1,C_2$. 
\end{lemma}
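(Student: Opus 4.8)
The plan is to reduce the statement to a single distance inequality and then to a property of the nearest-point projection onto $C_3$. Let $\alpha$ be a minimal length geodesic in $C_3$ from $C_1\cap C_3$ to $C_2\cap C_3$, with endpoints $y_1\in C_1\cap C_3$ and $y_2\in C_2\cap C_3$ (note that $C_1\cap C_3$ and $C_2\cap C_3$ are again nonempty convex subcomplexes, so such a geodesic exists). Since $C_3$ is convex in the CAT(0) space $Y$, $\alpha$ is also a geodesic of $Y$, and because its endpoints lie in $C_1$ and $C_2$ we automatically get $\mathrm{length}(\alpha)=d(C_1\cap C_3,C_2\cap C_3)\ge d(C_1,C_2)$. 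Thus $\alpha$ will be a spanning geodesic for $C_1,C_2$ as soon as I establish the reverse inequality
$$ d(C_1\cap C_3,\,C_2\cap C_3)\ \le\ d(C_1,C_2). $$

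To prove this I would use the nearest-point projection $\rho\colon Y\to C_3$, which is well defined and $1$-Lipschitz because $C_3$ is a closed convex subset of a CAT(0) space. By Lemma \ref{angles}(2) I may choose a spanning geodesic for $C_1,C_2$ whose endpoints $p\in C_1$ and $q\in C_2$ are \emph{vertices} of $Y$ and whose length is $d(C_1,C_2)$. The whole argument then hinges on the claim that $\rho(p)\in C_1$ and $\rho(q)\in C_2$. Granting this, $\rho(p)\in C_1\cap C_3$ and $\rho(q)\in C_2\cap C_3$, so
$$ d(C_1\cap C_3,\,C_2\cap C_3)\ \le\ d(\rho(p),\rho(q))\ \le\ d(p,q)\ =\ d(C_1,C_2), $$
which is exactly what is needed, and the lemma follows.

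It remains to justify that $\rho(p)\in C_1$, and here the rectangle-complex structure — not merely CAT(0) convexity — is essential: for a generic pair of convex subsets of a CAT(0) space the projection of one into the other need not land back in the first. I would argue with walls. A convex subcomplex is an intersection of halfspaces bounded by walls, so write $C_1=\bigcap_i H_i$, and fix a point $z\in C_1\cap C_3$. For a vertex $p$ the projection $\rho(p)$ is a vertex of $C_3$, and the relevant property of the projection is that a wall separates $p$ from $\rho(p)$ \emph{only if} it separates $p$ from $C_3$. Suppose $\rho(p)\notin H_i$ for some $i$. Then the wall $\partial H_i$ separates $p\in H_i$ from $\rho(p)$, hence separates $p$ from $C_3\ni z$, placing $z$ on the side of $\partial H_i$ away from $p$; but this contradicts $z\in C_1\subseteq H_i$. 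Therefore $\rho(p)\in H_i$ for every $i$, i.e. $\rho(p)\in C_1$, and the argument for $q$ is identical.

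The main obstacle is the wall-separation property of the projection invoked above — that the walls crossed by the geodesic $[p,\rho(p)]$ are precisely those separating $p$ from $C_3$ — together with the description of a convex subcomplex as an intersection of halfspaces. These are the two standard structural facts about CAT(0) rectangle complexes on which everything rests, and I expect extracting the first of them (using that a geodesic crosses each wall at most once and that the cellular neighborhood of a wall splits as $W\times[0,r]$, as recorded above) to be the only genuinely technical step; restricting attention to the vertex projection, supplied by Lemma \ref{angles}(2), keeps this clean. Everything else is a short assembly of $1$-Lipschitz estimates and the definition of a spanning geodesic.
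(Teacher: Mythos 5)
Your overall architecture is attractive and genuinely different from the paper's proof: the paper never projects anything, but instead shows by a first--variation argument that a minimal geodesic in $C_3$ makes angle $\geq \frac{\pi}{2}$ with $C_1$ and $C_2$ \emph{in the ambient link} (using that links are all--right spherical graphs, so a point of $link(p,C_3)$ at distance $\geq\frac{\pi}{2}$ from $link(p,C_1\cap C_3)$ inside $link(p,C_3)$ is automatically at distance $\geq\frac{\pi}{2}$ from $link(p,C_1)$ in $link(p,X)$), and then quotes Lemma \ref{angles}(1), i.e.\ the Flat Quadrilateral Theorem. Your Lipschitz bookkeeping, the use of Lemma \ref{angles}(2), and the halfspace argument deducing $\rho(p)\in C_1$ \emph{from} the wall--separation property are all fine. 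The genuine gap is the wall--separation property itself: the claim that every wall separating $p$ from its CAT(0) nearest point $\rho(p)\in C_3$ separates $p$ from all of $C_3$. This is the entire content of the lemma, and you have not proved it; you have relocated the difficulty and labelled it ``standard.'' It is not. The standard fact of this type concerns the \emph{combinatorial gate} (median) projection of vertices with respect to the $\ell^1$ metric on a cube complex, not the CAT(0) metric projection $\rho$, and nothing in this paper (whose only general CAT(0) reference is Bridson--Haefliger) supplies the CAT(0) version. Nor do the two facts you cite suffice. For example, if $\rho(p)$ lies in the carrier $N(W)=W\times[0,r]$ of an offending wall $W$, one can indeed use the product structure to slide $\rho(p)$ down to $C_3\cap W$ and strictly shorten the distance to $p$, a contradiction; but if $\rho(p)$ lies \emph{beyond} $N(W)$ this argument collapses, and the natural way to finish is precisely the angle--at--the--endpoint analysis that constitutes the paper's proof. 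So as written, your load--bearing step is unproven, and completing it honestly appears to require the paper's argument (or the Flat Quadrilateral Theorem) anyway.

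A second point, showing the step is delicate rather than routine: the property is false if $C_3$ is merely convex rather than a convex subcomplex. In $\mathbb{R}^2$ with the unit--square structure, take $p=(1,0)$ and $C_3=\{(\xi,\eta):\eta\geq\xi+2\}$; then $\rho(p)=(-\tfrac12,\tfrac32)$, so the wall $\{\xi=\tfrac12\}$ separates $p$ from $\rho(p)$, yet $(1,3)\in C_3$ lies on $p$'s side of that wall. Hence any correct proof must use the subcomplex structure of $C_3$ at the crucial moment --- but your sketch invokes subcomplex structure only for $C_1$ (via halfspaces) and treats the projection behavior of $C_3$ as a black box, which is exactly where the paper's link argument does its work. (Your auxiliary claim that $\rho(p)$ is a vertex when $p$ is one is also true but nontrivial --- it needs the ``geodesic stays parallel to a wall'' argument from the proof of Lemma \ref{angles}(2) --- and is likewise asserted rather than proved, though your argument can be arranged to avoid it.)
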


\begin{proof}  Let  $\alpha$ be a minimal length geodesic in $C_3$ from $C_1 \cap C_3$ to $C_2 \cap C_3$ with endpoints $p,q$.   Consider the endpoint $p$.  The minimality of $\alpha$ implies that the distance in $link(p, C_3)$  between $\alpha_p$ and $link(p, C_1\cap C_3)$ is at least $\frac{\pi}{2}$. 
Thus, $\alpha_p$ is either a vertex of $link(p, C_3) \backslash link(p, C_1)$, or it lies in an edge of
$link(p, C_3)$ neither of whose vertices are in $link(p, C_1)$.  In either case, the distance from $\alpha_p$ to $link(p, C_1)$ along {\it any} path in $link(p,X)$ is at least $ \frac{\pi}{2}$.  Thus  the angle between $\alpha$ and $C_1$  is $\geq \frac{\pi}{2}$.  The same is true at $q$, so the lemma follows from Lemma \ref{angles}.
\end{proof}


\section{Length functions and minimality}\label{minimal}

Given a group, $G$, acting by isometries on a metric space $(X, d)$, we define the \emph{translation length function} of the action to be the map $l:G \to \R$ given by
 $$l(g) = \inf \{ d(x, g.x)  | x \in X \}.$$
An important concept in the study of translation length functions is the notion of a \emph{minset}.  The minset of $g \in G$ is the set on which $l(g)$ is realized, that is, 
 $$min(g) = \{x \in X | d(x, g.x) = l(g) \}.$$ 
  The action of $G$ on $X$ is said to be \emph{semi-simple} if for all $g \in G$, $min(g) \ne \emptyset$. 
   An element of $G$ is a \emph{hyperboic} isometry, if its minset is non-empty and its translation length is non-zero.  In particular, if the action of $G$ is free and semi-simple, then every element is hyperbolic.

   More generally, for a subgroup, $H< G$, the minset of $H$ is defined by 
   $$min(H) =\bigcap_{g \in H} min(g).$$
    
   For actions on CAT(0) spaces, the structure of minsets is well understood and detailed in \cite{BH}.  
Notably, $g$ is hyperbolic if and only if there exists a geodesic line in $X$  on which $g$ acts as a non-trivial translation.  Such a line is called an \emph{axis} of g and $l(g)$ is equal to the translation length along this axis.  The minset of $g$ decomposes as a product $min(g)=Y \times \E^1$ where $Y$ is a convex subspace fixed by $g$ each line $\{y\} \times \E^1$ is an axis for $g$.  
   Moreover generally, by the Flat Torus Theorem, if $H < G$ is isomorphic to $\Z^k$, then $min(H)$ is isometric to $Y \times \E^k$ for some convex subspace $Y$, $H$ fixes $Y$ and acts on $\E^k$ by translations. 
   
Now suppose $\AG$ is a right-angled Artin group whose defining graph $\G$ has no triangles and no components consisting of a single vertex (i.e., no valence 0 vertices).  We are interested in actions of $\AG$ on 2-dimensional CAT(0)  rectangle complexes.    In this situation, the maximal rank of an abelian subgroup is 2 and for such a subgroup $H$,  the minset of a $H$ is just a single flat, $\E^2$ (since $Y$ is convex and 0-dimensional).  The rectangular structure on this flat gives an orthogonal grid and $H$ acts as a finite index subgroup of the group of translations of this grid.  In particular, $H$ contains elements which translate parallel to each of the two grid directions. We call such an element a \emph{gridline isometry}.

   As in the case of free groups acting on trees, we will need a concept of ``minimality" for our actions.  
For a free group acting semi-simply on  tree, a minimal action is defined as one for which there is no invariant subtree.  This is equivalent to requiring that every point in the tree lie in the minset of some group element, that is, every point lies on some axis.  This property is key to the proof of  Culler and Morgan's theorem.  With this in mind, we define

\begin{definition}  Assume $\G$ is has no triangles and no valence 0 vertices.  Suppose $\AG$ acts properly, semi-simply by isometries on a CAT(0) rectangle complex $X$.  We will say that the action of $\AG$ is \emph{minimal} if $X$ is 2-dimensional (so minset of $\Z^2$-subgroups are isometric to $\E^2$) and $X$ is covered by the minsets of its $\Z^2$-subgroups.
\end{definition}

We remark that the minimality condition implies that the action is cocompact and hence also that $X$ has finite shapes.  Also, since $\AG$ is torsion-free, any proper action is a free action, so every element of $\AG$ is hyperbolic.

It is easy to see that minimality implies that $X$ has no convex invariant subspace. Unlike the tree case, however, the converse is not true as the next example shows.  
\begin{example}
Let $\G$ be a pentagon with edges $e_1, \dots ,e_5$.  Construct a cubical complex $\bar X$ as follows (see Figure \ref{fig:pentagon}.)  Start with 5 disjoint tori $T_1, \dots ,T_5$ corresponding to the 5 edges of $\G$.  For each $v$ vertex of $\G$, glue a tube of length 2 between the two tori containing curves marked $v$. 
The tubes create a non-trivial loop of length 10 in the center.  Glue a pentagon made of 5 squares onto this loop.  Call the central vertex of this pentagon $x_0$.  Then it is easy to check that  the fundamental group of $\bar X$ is $\AG$ and  the links in $\bar X$ are flag.  It follows that the universal covering space $X$ of $\bar X$ is CAT(0) and has a free, cocompact action of $\AG$.  It is also easy to see that $X$ is complete and has the geodesic extension property, so by Lemma 6.20 of \cite{BH}, $X$ has no $\AG$-invariant convex subcomplex.  On the other hand, it does not satisfy our definition of minimality since the link of any vertex lying over $x_0$ is a circle of radius $\frac{5\pi}{2}$ hence cannot lie in any flat. 
\end{example}

\begin{figure}
\begin{center}
\labellist
\small\hair 2pt
\pinlabel{$x_0$} at 225 188
\endlabellist
\includegraphics[width=2.7in]{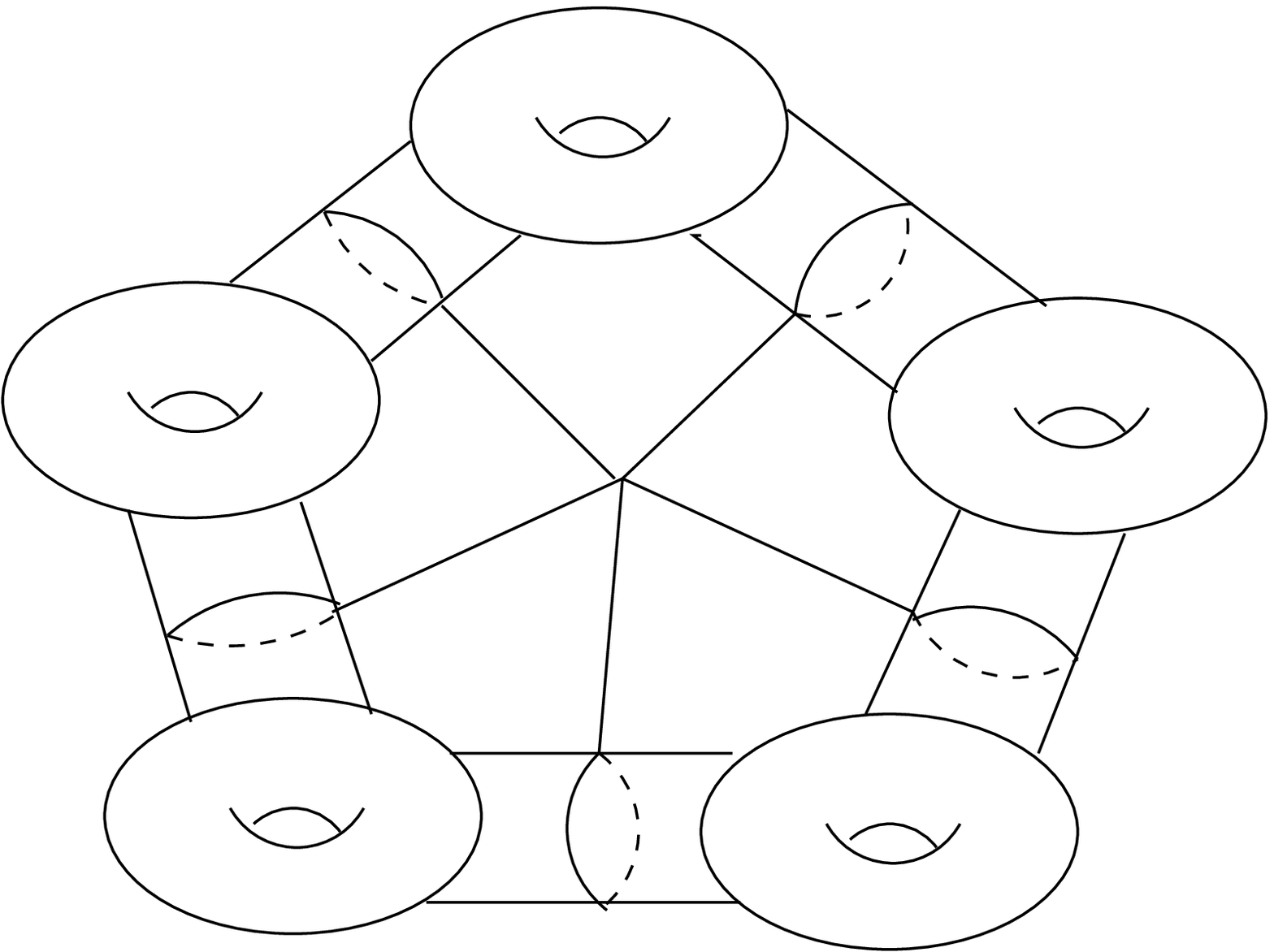}
\end{center}
\caption{} 
\label{fig:pentagon}
\end{figure}


\section{Intersections of minsets}

From now on, we assume that $\G$ has no triangles and no valence 0 vertices.  Suppose $X$ is a 2-dimensional CAT(0) rectangle complex with a minimal $\AG$ action.  Our goal is to prove that the geometry of $X$ and the action of $\AG$ is determined by the length function. In light of the definition of minimality, the
interaction between minsets of $\Z^2$-subgroups of $\AG$ will be central to the proof of the theorem.  

By definition of $\AG$, any pair of adjacent vertices in $\G$ generates a $\Z^2$-subgroup.
More generally, if $J=V_1 \ast V_2$ is a complete bipartite subgraph of $\Gamma$, then $A_J$ is a product of free groups, $A_J= F(V_1) \times F(V_2)$, so any pair of non-trivial elements $h_1 \in F(V_1)$,  $h_2 \in F(V_2)$ generates a $\Z^2$-subgroup.

\begin{definition}
	We say that a subgroup $H < A_\Gamma$ is  a \emph{basic $\Z^2$-subgroup} if $H$ is a maximal $\Z^2$-subgroup and $H$ is contained in $A_J=F(V_1) \times F(V_2)$ for some complete bipartite subgraph $J \subset \Gamma$.  
\end{definition}	

If $H$ is a basic subgroup, it is a simple exercise to show that there is a pair of generators $h_1, h_2$ for $H$, unique up to inversion, such that $h_1 \in F(V_1)$ and $h_2 \in F(V_2)$.  We refer to these as  \emph{basic generators} of $H$.

\begin{lemma}\label{lemma:BasicConj}
Every maximal  $\Z^2$-subgroup $H$ is conjugate to a basic $\Z^2$-subgroup and has a unique (up to inversion) set of generators conjugate to the basic generators.  (We call these the basic generators of $H$.)
\end{lemma}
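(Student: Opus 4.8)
The plan is to reduce the whole statement to an analysis of the support and centralizer of a single element of $H$. Since $H$ is abelian, it is contained in the centralizer $Z(a)$ of each of its nontrivial elements $a$, so I would begin by choosing a nontrivial $a \in H$ and conjugating all of $\AG$ so that $a$ becomes \emph{cyclically reduced}; this conjugation is the only source of the word ``conjugate'' in the statement, and for the uniqueness part it suffices to treat the cyclically reduced case and observe that the characterization produced is itself conjugation-invariant. Write $S = \mathrm{supp}(a)$ and let $c$ be the primitive root of $a$, which is again cyclically reduced with $\mathrm{supp}(c) = S$. Because $\Z^2 \cong H \le Z(a) = Z(c)$, the centralizer $Z(c)$ must contain a rank-two free abelian subgroup, and the goal is to show that this, together with triangle-freeness, forces $S$ to sit inside a complete bipartite subgraph.

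Next I would invoke Servatius' centralizer theorem (as recorded in \cite{Ch07}): for $c$ cyclically reduced, primitive, with non-join support, $Z(c) = \langle c \rangle \times A_{lk(S)}$, where $lk(S) = \bigcap_{v \in S} lk(v)$ is the common link. Running a case analysis on the full subgraph spanned by $S$ and using that $\G$ is triangle-free, the possibilities collapse. If $S$ spans an edge and is connected but not a join, then any common neighbor of two adjacent vertices of $S$ would complete a triangle, so $lk(S) = \emptyset$ and $Z(c) = \langle c \rangle \cong \Z$, contradicting $\Z^2 \le Z(c)$; this case is vacuous. If $S$ is a connected join, triangle-freeness forces it to be complete bipartite, $S = V_1 \ast V_2$, and $c$ splits as $c_1 c_2$ with $c_i \in F(V_i)$. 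In the remaining case $S$ must be an independent set (here the no-valence-$0$ hypothesis ensures $lk(S) \ne \emptyset$ when $S$ is a single vertex), and then $J := S \ast lk(S)$ is complete bipartite with $Z(c) = \langle c \rangle \times F(lk(S)) \le A_J = F(S) \times F(lk(S))$. In every surviving case $H \le A_J$ for a complete bipartite $J = V_1 \ast V_2$. Projecting $H$ onto the two free factors gives cyclic images $\langle p_1 \rangle$, $\langle p_2 \rangle$ (an abelian subgroup maps to an abelian, hence cyclic, subgroup of a free group), both nontrivial since otherwise $H$ would embed in a free group. Thus $H \le \langle p_1 \rangle \times \langle p_2 \rangle \cong \Z^2$, and maximality of $H$ forces equality, exhibiting $H$ as basic with basic generators $h_1 = p_1$ and $h_2 = p_2$.

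For uniqueness up to inversion I would give an intrinsic description of the two basic directions. Because $V_1$ and $V_2$ are independent sets with every vertex of $V_1$ adjacent to every vertex of $V_2$, an element $h_1^p h_2^q \in H$ with both exponents nonzero has cyclically reduced support meeting both parts and therefore spanning an edge, whereas every power of $h_1$ and every power of $h_2$ has support contained in a single, edge-free part. Since the support of a cyclically reduced representative is a conjugacy invariant, the set of elements of $H$ whose support spans no edge is exactly $\langle h_1 \rangle \cup \langle h_2 \rangle$, independent of the chosen complete bipartite $J$. This pins down the two cyclic subgroups $\langle h_1 \rangle$ and $\langle h_2 \rangle$, and hence the generating set $\{h_1, h_2\}$ up to inversion.

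The main obstacle is the centralizer bookkeeping in the second step: one must verify that triangle-freeness genuinely collapses Servatius' general pure-factor decomposition into precisely the two complete bipartite possibilities above, ruling out mixed configurations such as a join part carrying a nontrivial common link, or several mutually non-adjacent components that individually span edges. Once the complete bipartite $J$ is in hand and shown to have both parts nonempty, the extraction of the basic generators by projection and the uniqueness argument are essentially formal.
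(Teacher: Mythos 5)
Your proposal is correct. For the first assertion (conjugacy to a basic subgroup) you follow essentially the paper's route: conjugate so that $H$ contains a cyclically reduced element, apply Servatius' centralizer theorem \cite{Ser}, and use triangle-freeness to force the join $Supp \cup lk(Supp)$ to be complete bipartite; your three-case analysis (edge-spanning non-join gives a cyclic centralizer, a join must be complete bipartite, an independent set gives $S \ast lk(S)$) is just a fleshed-out version of the paper's statement that the centralizer is non-cyclic only when a join appears, and your worry about ``mixed configurations'' is indeed only routine bookkeeping, since any support spanning an edge has empty link in a triangle-free graph. You also write out the projection argument extracting $h_1 \in F(V_1)$, $h_2 \in F(V_2)$, which the paper dismisses as a ``simple exercise'' just before the lemma. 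Where you genuinely diverge is the uniqueness claim: the paper handles it in one line by citing the fact that the normalizer of a $\Z^2$-subgroup equals its centralizer (so any conjugation preserving a basic subgroup acts on it trivially), whereas you characterize the basic generators intrinsically as those elements of $H$ whose cyclically reduced support spans no edge of $\G$, and observe that this property is a conjugacy invariant. Your argument is longer but more self-contained: it simultaneously proves the ``simple exercise'' (uniqueness of $h_1,h_2$ within a fixed join), makes the independence from the choice of complete bipartite subgraph $J$ explicit, and directly handles the case where $H$ is conjugated onto two \emph{different} basic subgroups, a point the paper's normalizer argument leaves implicit. The paper's approach buys brevity by leaning on a standard RAAG fact; yours buys transparency and avoids having to invoke anything beyond conjugacy-invariance of supports.
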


\begin{proof}  The first assertion follows from results of Servatius \cite{Ser} which we recall here briefly.
An element of $\AG$ is called cyclically reduced if it is of minimal length in its conjugacy class. 
For any $g \in \AG$, there exists a unique cyclically reduced element conjugate to $g$.  Thus, for any $\Z^2$-subgroup $H< \AG$, we may assume up to conjugacy that $H$ contains a cyclically reduced element $h$. 

In \cite{Ser}, Servatius describes explicitly the centralizer of a cyclically reduced element.  
Write $Supp(h)$ for the set of generators appearing in a minimal length word for $h$.
It follows from Servatius' theorem that the centralizer of $h$ is cyclic unless the subgraph of $\G$ spanned by $Supp(h)$ decomposes as a non-trivial join, or the set $L$ of vertices commuting with all of $supp(h)$ is non-empty, in which case $Supp(h) \cup L$ span a join.
 Since $H$ is contained in the centralizer of $h$, this centralizer is non-cyclic and the first assertion follows.

The uniqueness of basic generators follows from the fact that the normalizer of any $\Z^2$-subgroup is equal to its centralizer.  
\end{proof}

We can now prove some simple facts about $\Z^2$-minsets. 

\begin{lemma}\label{compact}  Let $H_1, H_2$ be $\Z^2$-subgroups with minsets  $M_1$ and 
$M_2$.  Suppose $M_1 \cap M_2 \neq \emptyset$.  Then one of the following holds.
\begin{enumerate}
\item  $H_1 \cap H_2 = \{1\}$ and $M_1 \cap M_2$  is a compact (possibly degenerate) rectangle.
\item  $H_1 \cap H_2 = \langle h \rangle$, $h \neq 1$, and  $M_1 \cap M_2$  is an infinite line or infinite Euclidean strip consisting of a union of $h$-axes.
\end{enumerate}
\end{lemma}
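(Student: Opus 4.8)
The plan is to study $I:=M_1\cap M_2$ as a convex subcomplex sitting inside the flat $M_1\cong\E^2$ and to match the boundedness of $I$ with the rank of $H_1\cap H_2$. First I would pin down the shape of $I$. As an intersection of convex sets it is convex, and as an intersection of the two flat subcomplexes $M_1,M_2$ it is a subcomplex of $X$ lying in $M_1$; hence $I$ is a union of cells of the rectangular grid on $M_1\cong\E^2$. A convex subset of $\E^2$ built from axis-parallel grid cells has only horizontal and vertical boundary edges, so it is a (possibly degenerate or infinite) grid rectangle $I=J_1\times J_2$, where each $J_i$ is a point, a bounded segment, or an infinite ray or line; the same reasoning applies inside $M_2$. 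In particular $I$ is bounded precisely when both $J_i$ are bounded, in which case it is a compact, possibly degenerate, rectangle.

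Next comes the easy implication. If $h\in H_1\cap H_2$ is nontrivial then, since $M_1\subseteq min(h)$ is an $h$-invariant flat on which $h$ translates, the $h$-axis through any $p\in I$ lies in $M_1$; the same holds in $M_2$, so this whole axis lies in $I$. Thus $I$ is a union of parallel $h$-axes, hence unbounded; note this also forbids one-sided ends, so rays and half-strips cannot occur.

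The hard part is the converse: if $I$ is unbounded then $H_1\cap H_2\neq\{1\}$. Here I would take a full line $\ell\subseteq I$ in an unbounded grid direction; since $I$ is a grid rectangle in both flats, $\ell$ is a gridline of each, so there are gridline isometries $g_1\in H_1$ and $g_2\in H_2$ translating along $\ell$, both having $\ell$ as an axis. On $\ell$ they act as commuting translations of $\R$, so $[g_1,g_2]$ fixes $\ell$ pointwise and, the action being free, $[g_1,g_2]=1$. Then $A=\langle g_1,g_2\rangle$ is abelian, so $\Z$ or $\Z^2$; but every nontrivial element of $A$ preserves $\ell$ and hence has $\ell$ as an axis, giving $\ell\subseteq min(A)$, and if $A\cong\Z^2$ the Flat Torus Theorem would make $A$ a rank-two lattice of translations of $min(A)\cong\E^2$ preserving the line $\ell$, which is impossible. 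Hence $A=\langle h\rangle$, and writing $g_1=h^m$, $g_2=h^n$ with $m,n\neq0$ gives $h^{mn}\in H_1\cap H_2\setminus\{1\}$. Converting a shared geodesic axis into a genuine common group element is the step I expect to be the real obstacle, and it is the freeness of the action together with the Flat Torus Theorem that makes it go through.

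Finally I would assemble the dichotomy. The two implications give $I$ bounded $\iff H_1\cap H_2=\{1\}$, which is case (1). If instead $H_1\cap H_2=\langle h\rangle$ with $h\neq1$, the easy direction shows $I$ is a union of $h$-axes; were $I$ unbounded in a second independent direction, the previous argument applied to a line in that direction would yield an element of $H_1\cap H_2$ independent of $h$, contradicting rank one. So $I=J_1\times J_2$ has exactly one infinite factor, i.e. $I$ is a line (when the other factor is a point) or an infinite strip (when it is a bounded segment), in either case a union of $h$-axes; this is case (2). The only remaining possibility, $H_1\cap H_2\cong\Z^2$, would force $min(H_1\cap H_2)\supseteq M_1\cup M_2$ and hence $M_1=M_2$, so it does not occur for distinct flats.
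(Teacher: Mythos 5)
Your setup (the grid-rectangle structure of $I=M_1\cap M_2$) and your easy direction match the paper, and your idea for the converse --- commutator trick plus freeness plus the Flat Torus Theorem, in place of the paper's properness argument --- is a genuinely different route. But as written it has a gap at its very first step: ``take a full line $\ell\subseteq I$ in an unbounded grid direction.'' When $I$ is merely assumed unbounded, you only know that some factor $J_i$ is unbounded, and by your own taxonomy $J_i$ may be a ray; that is, $I$ may a priori be a ray or a half-strip. Ruling this out is part of what the lemma asserts, and in your writeup it is ruled out only in the easy direction, under the hypothesis $H_1\cap H_2\neq\{1\}$ --- exactly the hypothesis you do not yet have in the converse. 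So assuming a full line is circular. The gap infects the rest of the argument: with only a ray $\alpha$, the gridline of $M_1$ containing $\alpha$ and the gridline of $M_2$ containing $\alpha$ need not agree beyond $\alpha$ (the two flats can branch at the end of the ray), so there is no single line preserved by both $g_1$ and $g_2$, and the assertions ``$[g_1,g_2]$ fixes $\ell$ pointwise'' and ``every nontrivial element of $A$ has $\ell$ as an axis, so $\ell\subseteq min(A)$'' do not apply as stated.

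The argument can be repaired, but not for free. One option is to run your commutator argument on points sufficiently deep in the ray: choosing $g_1,g_2$ to translate inward, each of $g_1^{\pm1},g_2^{\pm1}$ carries sufficiently deep points of $\alpha$ to points of $\alpha$, so $[g_1,g_2]$ fixes all sufficiently deep points and is trivial by freeness; but one must then also redo the exclusion of $A\cong\Z^2$ without an invariant line, e.g.\ by comparing $l(g_1^ag_2^b)=|at_1+bt_2|$ (where $t_i=l(g_i)$) with the lower bound $c\,\|(a,b)\|$ that a rank-two lattice of translations of the flat $min(A)$ would impose --- which is essentially a Dirichlet/discreteness argument anyway. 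The paper sidesteps all of this by working directly with the ray and invoking properness: the points $h_1^np$ and $h_2^mp$ both lie on the ray, so the elements $h_2^{-m}h_1^n$ with $|n\,l(h_1)-m\,l(h_2)|$ bounded are infinitely many isometries displacing the ray's endpoint a bounded distance, and properness forces two of them to coincide, giving $h_1^n=h_2^m$ with $n,m\neq 0$. That pigeonhole step is the ingredient your proposal is missing (or must replace) in order to handle the ray case.
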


\begin{proof}  First note that $M_1 \cap M_2$ is a convex subcomplex of $M_i$.   If $H_1 \cap H_2$ contains a non-trivial element $h$, then $M_1 \cap M_2$ is $h$-invariant, hence it is either empty or it is a union of $h$-axes.  Conversely, if $M_1 \cap M_2$ is unbounded, then it contains a ray $\alpha$ along some gridline.  Both $H_1$ and $H_2$ contain gridline isometries $h_1,h_2$ that translate along this ray.  The fact that the action is proper then implies that $h_1^n=h_2^m$ for some $n$ and $m$, so $H_1 \cap H_2$ is non-trivial.
\end{proof}

\begin{lemma}\label{branch}  Suppose $H_1, H_2$ are $\Z^2$-subgroups with 
$H_1 \cap H_2 = \langle h \rangle$, $h \neq 1$.  Then their minsets $M_1,M_2$ have branching along some $h$ axis. In particular, $h$ is a gridline isometry.
\end{lemma}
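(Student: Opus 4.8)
The plan is to exploit the fact that $M_1$ and $M_2$ are two \emph{distinct} flats forced to overlap along the $h$-axes produced by Lemma~\ref{compact}, and to locate the branching precisely at the boundary of this overlap.

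First I would record that $M_1\neq M_2$. If the two flats coincided, then $H_1$ and $H_2$ would be two rank-$2$ lattices of translations of the same flat $\E^2$; since the action is proper, the group they generate must be a discrete group of translations, so each $H_i$ has finite index in it and $H_1\cap H_2$ has rank $2$, contradicting $H_1\cap H_2=\langle h\rangle\cong\Z$. By Lemma~\ref{compact}(2), $S:=M_1\cap M_2$ is an infinite line or Euclidean strip which is a union of (parallel) $h$-axes, and because $M_1\neq M_2$ this convex set is a \emph{proper} subset of each flat. A proper convex union of parallel $h$-axes inside the plane $M_1$ therefore has at least one boundary $h$-axis $\ell$ lying in $M_1$.

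Next I would produce the branching along $\ell$. Since $M_1$ and $M_2$ are each a full copy of $\E^2$, each extends to a half-plane on the side of $\ell$ pointing away from $S$. If these two outward half-planes agreed near $\ell$, their common portion would lie in $M_1\cap M_2=S$, contradicting that $\ell$ bounds $S$; hence they are distinct half-planes. Together with the region shared on the $S$-side of $\ell$, this gives at least three half-planes meeting along the entire line $\ell$, which is exactly branching of $M_1\cup M_2$ along the $h$-axis $\ell$.

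Finally, this branching pins down $h$. In a $2$-dimensional CAT(0) rectangle complex a point interior to a $2$-cell has a circular link, so no branching can occur there; thus every point of $\ell$ must lie in the $1$-skeleton. Since $\ell$ is a geodesic line contained in the $1$-skeleton, and inside $M_1$ the $1$-skeleton of $X$ restricts to the grid of the flat $M_1$, the line $\ell$ runs along a single grid direction. Hence $h$, which translates along $\ell$, translates parallel to a grid direction of $M_1$ (and symmetrically of $M_2$), so $h$ is a gridline isometry. I expect the main obstacle to be the branching step: one must argue carefully that the two outward half-planes are genuinely distinct and that they, together with the overlap, meet along the \emph{whole} of $\ell$ rather than at isolated points, so that the branching holds in the strong sense needed to force $\ell$ into the $1$-skeleton.
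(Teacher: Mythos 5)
There is a genuine gap: your entire argument silently assumes $M_1\cap M_2\neq\emptyset$. You invoke Lemma~\ref{compact}(2) to say that $S=M_1\cap M_2$ is an infinite line or strip of $h$-axes, but that lemma carries the hypothesis ``Suppose $M_1\cap M_2\neq\emptyset$''; it tells you nothing when the minsets are disjoint, and disjointness is perfectly possible even when $H_1\cap H_2=\langle h\rangle$ is infinite cyclic. For a concrete example, take $\G$ a star, so $\AG\cong \Z\times F$ with $\Z=\langle z\rangle$ central, and $X=\E^1\times T$: for leaves $a,b$ the subgroups $H_1=\langle z,a\rangle$ and $H_2=\langle z,b\rangle$ have minsets $\E^1\times\alpha(a)$ and $\E^1\times\alpha(b)$, which are disjoint whenever the axes $\alpha(a),\alpha(b)$ in the tree $T$ are disjoint, even though $H_1\cap H_2\supseteq\langle z\rangle$. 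This disjoint case is not a pathology one can ignore; the lemma is later applied (e.g., in Lemma~\ref{nocorners}) with no control on whether the relevant minsets meet. The paper's proof handles exactly this case: when $M_1\cap M_2=\emptyset$, the Flat Quadrilateral Theorem shows that the union of spanning geodesics between $M_1$ and $M_2$ is a convex, $h$-invariant Euclidean strip, which meets each $M_i$ in a single $h$-axis; the strip leaving $M_i$ along that axis is what produces the branching. Your proposal contains no substitute for this step.

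Your treatment of the intersecting case is fine, and in fact more detailed than the paper's one-line appeal to Lemma~\ref{compact}: the discreteness argument showing $M_1\neq M_2$, the identification of a boundary $h$-axis $\ell$ of the proper convex overlap, the observation that the two outward half-planes cannot agree near $\ell$ (else their common part would lie in $S$), and the conclusion that $\ell$ must lie in the $1$-skeleton and hence run along a grid direction are all correct and worth writing out. But as submitted, the proof only establishes the lemma under the additional hypothesis $M_1\cap M_2\neq\emptyset$, so it does not prove the stated result.
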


\begin{proof}  If $M_1 \cap M_2 \neq \emptyset$, this follows from the previous lemma.
If   $M_1 \cap M_2 = \emptyset$, it follows from the Flat Quadrilateral Theorem (\cite{BH}, p. 181)
that the set of spanning geodesics between them forms a convex, $h$-invariant Euclidean strip.  This strip intersects each $M_i$ in a single $h$-axis.
\end{proof}


The structure of  links at branch points in $X$ will be key to the proof of the main theorem.
We are particularly interested in branch points whose links are sufficiently complicated.  

\begin{definition} A point $x \in X$ is a \emph{corner}  if branching occurs at $x$ along a pair of orthogonal grid lines, or equivalently, if $link(x,X)$ is not isometric to the suspension of some discrete set.
\end{definition}

The terminology comes from the following observation. If $x$ is a corner as defined above, then for any minset $M$ containing $x$, we can find minsets $M_1$ and $M_2$ (not necessarily distinct) such that $x$ is a corner, in the usual sense, of the rectangle $M \cap M_1 \cap M_2$.

Certain minsets must have corners.  For example, if $v,w$ are adjacent vertices of valence at least 2 in $\G$ then it follows from Lemma \ref{branch} that the minset if $\langle v,w\rangle$ has branching along both the  $v$ and $w$ axes, hence it must have corners.
Also,  if $\G$ has more than one component, then there is some $\Z^2$-subgroup in each component whose minset has corners.  To see this, note that since $X$ is connected, some minset from each component intersects a minset in some other component.  This intersection must be compact, hence gives rise to corners.  

On the other hand, if $\G$ is the star of a single vertex $v$, then no minset in $X$ has corners since in this case. $X=min(v)= \E^1 \times T$ and all branching occurs along some axis of $v$. 
As the following lemma shows, however, in all other cases, $X$ has plenty of corners.

\begin{lemma} \label{nocorners}  Assume $\G$ is not the star of a single vertex and
suppose $H < F(V_1) \times F(V_2)$ is a basic $\Z^2$-subgroup whose minset $M$ has no corners.  Then 
\begin{enumerate}
\item either $V_1$ or $V_2$ consists of a single vertex of valence at least 2, and
\item $M$ is contained in a union of $\Z^2$-minset with corners.
\end{enumerate}
\end{lemma}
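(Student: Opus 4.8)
The plan is to prove the two assertions in turn, using the first to fix convenient coordinates for the second. Write $H=\langle h_1,h_2\rangle$ with basic generators $h_1\in F(V_1)$, $h_2\in F(V_2)$, so that $M=min(H)$ is a flat $\E^2$ whose two grid directions are the $h_1$- and $h_2$-axes. Since $\G$ has no triangles, the neighbors of any vertex $v$ form an independent set, so its centralizer splits as $C(v)\iso\Z\times F_v$ with $F_v$ free of rank equal to the valence of $v$, and correspondingly $min(v)\iso\E^1\times Y_v$ for a tree $Y_v$ on which $F_v$ acts. For assertion (1) I would argue contrapositively: if $|V_1|\ge 2$ and $|V_2|\ge 2$ then $M$ has a corner. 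The mechanism is that branching of $M$ transverse to the $h_1$-axis is governed by the centralizer of $h_1$, which by Servatius' description contains a copy of $F(V_2)$ (every vertex of $V_2$ commutes with all of $Supp(h_1)\subseteq V_1$); when $|V_2|\ge 2$ this free group acts on a genuinely branching tree, so by Lemma \ref{branch} the flat $M$ branches along a family of $h_1$-lines, periodic under $h_2$. Symmetrically $|V_1|\ge 2$ yields branching along a family of $h_2$-lines, periodic under $h_1$. Two such transverse branch-lines meet in a single point of $M$, which is then a corner. Hence corner-freeness forces, say, $V_1=\{v\}$. Since $v$ is adjacent to all of $V_2$, its valence is at least $|V_2|$; if this is $\ge 2$ we are done, while if $v$ has valence $1$ then $V_2=\{w\}$ is also a single vertex and $\{v,w\}$ cannot be an entire component, for otherwise (as $\G$ is neither a star nor has valence-$0$ vertices) $\G$ would have a second component and the intersection of $M$ with a minset from it would be compact by Lemma \ref{compact}, producing a corner; thus $w$ has valence $\ge 2$ and $V_2$ is the desired single vertex.

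For assertion (2), after relabeling I may assume the distinguished single vertex of valence $\ge 2$ is $v$, so that $v\in H$ and $M\subseteq min(v)=\E^1_v\times Y_v$ with $M=\E^1_v\times A$, where $A$ is the axis in $Y_v$ of the second generator. I want to show every $x=(t,\tau)\in M$ lies in a $\Z^2$-minset with a corner. For $g\in F_v$ with $g\neq 1$ one computes $min(\langle v,g\rangle)=\E^1_v\times(\text{axis of }g\text{ in }Y_v)$, and this flat has a corner over a point $\tau^\ast$ of its axis precisely when (i) $\tau^\ast$ is a branch point of $Y_v$ and (ii) the edge of the $g$-axis at $\tau^\ast$ is labeled by a neighbor of $v$ of valence $\ge 2$. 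Using the same no-star and multi-component reasoning as in (1), corner-freeness of $M$ together with the valence hypothesis guarantees that $v$ has at least one neighbor $u$ of valence $\ge 2$, so $Y_v$ does contain branch points incident to such a valence-$\ge 2$-labeled edge. For a given $x$ I would then choose $g$ so that its axis in $Y_v$ passes through both $\tau$ and such a good branch point $\tau^\ast$ — possible because $F_v$ has rank $\ge 2$, so a hyperbolic element whose axis crosses two prescribed edges can be produced by a ping-pong argument. The resulting $N=min(\langle v,g\rangle)$ then contains $x$ and has a corner at the point over $\tau^\ast$, and letting $x$ vary these minsets cover $M$.

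The delicate point, and where I expect the real work to lie, is the covering step in (2). I must verify that conditions (i) and (ii) genuinely produce a non-suspension link in $X$ — that is, that branching of the abstract tree $Y_v$ and the existence of an extra commuting neighbor really combine to make the link at $\tau^\ast$ fail to be a suspension — rather than merely recording structure internal to $min(v)$. I must also handle points $\tau\in A$ lying on edges labeled by valence-$1$ neighbors, showing that even these are crossed by an axis meeting a good branch point elsewhere, and confirm that $\langle v,g\rangle$ is a $\Z^2$ with $min(\langle v,g\rangle)$ of the asserted product form. Assembling these, together with the star-component case of the valence argument, is the crux.
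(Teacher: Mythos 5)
Your part (1) is essentially the paper's own argument --- the auxiliary subgroups $\langle h_1,v_2\rangle$, $\langle v_1,h_2\rangle$ together with Lemma \ref{branch} give branching along orthogonal families of lines, and the case $|V_1|=|V_2|=1$ is disposed of via a second component --- so that half is fine. The gap is in part (2), exactly at the point you yourself flag as ``the crux.'' Your corner criterion (i)+(ii) is not available in this setting: $X$ is an arbitrary CAT(0) rectangle complex with an isometric $\AG$-action, not the Cayley 2-complex, so edges of the tree factor $Y_v$ of $min(v)$ carry no generator labels, and condition (ii) is not even meaningful. More fundamentally, no local combinatorial criterion of this kind can work: the group-theoretic input (Lemma \ref{branch}) only guarantees branching along \emph{some} axis of a given element; it never tells you \emph{where} the branching sits, so you cannot certify that a prescribed point of $min(v)$ has a non-suspension link. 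Finally, your claim that corner-freeness of $M$ plus the valence hypothesis forces $v$ to have a neighbor of valence $\ge 2$ is unjustified, and it is precisely the case the paper must treat separately: when every neighbor of $v$ has valence 1, $st(v)$ is an entire component of $\G$, there are no ``good'' edges at all, and corners of minsets in this component arise only from compact intersections with minsets of \emph{other} components (Lemma \ref{compact}) --- something no label condition along $\alpha(g)$ can detect. In that case your covering scheme has nothing to route an axis through, so the argument fails outright there.

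The paper's proof avoids locating corners altogether; it is existential plus a transport trick. First it produces \emph{one} minset $M'=min\langle v,p\rangle$ with corners: either some $w\in V_2$ has valence $\ge 2$ (then $min\langle v,w\rangle$ has corners, since Lemma \ref{branch} applies in both grid directions), or else $st(v)$ is a whole component and a corner comes from a compact cross-component intersection. Then, writing $H=\langle v,h\rangle$, it sets $g=h^kp^k$ for $k$ large: standard tree facts give that $\alpha(g)\cap\alpha(p)$ is a segment of length $>l(p)$ and $\alpha(g)\cap\alpha(h)$ is a segment of length $>l(h)$, so $N=min\langle v,g\rangle$ meets $M'$ in an infinite strip long enough to contain a translate of a corner of $M'$ (hence $N$ has corners), while $N\cap M$ is an infinite strip whose $h$-translates cover $M$; the translates $h^nN$ are then the desired cover. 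Your idea of forcing an axis through two prescribed points of $Y_v$ could be repaired in this spirit --- route it through the $Y_v$-projection of an \emph{actual} corner point of $M'$, so that $N$ literally contains that point --- but that repair replaces your criterion (i)+(ii) by the paper's existence-and-transport argument; as written, your proof of (2) does not close.
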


\begin{proof} We may assume without loss of generality that $V_1 \ast V_2$ is a maximal join in $\G$.
Let  $H= \langle h_1,h_2\rangle $ be a basic generating set for $H$ with $h_i \in F(V_i)$.  If $V_1$ and $V_2$ both contain at least 2 elements, then we can choose  $v_i \in V_i$ such that $v_i \neq h_i$.  Letting $H_1=\langle h_1, v_2\rangle $ and $H_2 = \langle v_1, h_2\rangle $, it follows from Lemma \ref{branch} that $M$ has branching along both $h_1$ and $h_2$ axes.  This contradicts the assumption that $M$ has no corners.  

Thus one of the sets $V_i$ consists of a single vertex, say $V_1=\{v\}$ and $V_2$ consists of all the adjacent vertices.  If $V_2$ also contains only a single vertex $w$,  then by the maximality of $F(V_1) \times F(V_2)$,  the edge between $v$ and $w$ is a component of $\G$.  Since $\G$ is not a star, it must have additional components and it follows from the discussion above that $M$ has corners.  This again contradicts our assumption, so $V_2$ must have cardinality at least 2.     

To prove the second statement of the lemma, consider the minset of $v$.   It decomposes as $\R \times T$ where the first factor is an axis for $v$ and the second factor is a tree on which the free group $F(V_2)$ acts.  For any $g \in F(V_2)$, let $\alpha(g)$ denote the axis for $g$ in $T$.  Then $min\langle v,g\rangle$  is the product $R \times \alpha(g) \subset \R \times T$.  We claim that at least one of these minset has corners.  If some $w \in V_2$ has valence at least 2, this follows from the discussion preceding the lemma.  If every  $w \in V_2$ has valence 1, then $st(v)=V_1 \ast V_2$ is an entire component of $\G$.  Since by hypothesis, $\G$ is not the star of a single vertex, it must contain other components and the claim follows from the first paragraph of the proof.  

Say $H = \langle v,h\rangle$, $h \in F(V_2)$.  Choose  $p \in F(V_2)$ such that $M'=min\langle v,p\rangle$ has corners.
Set $g=h^kp^k$ for some fixed $k$ and consider the axes for $p$, $h$, and $g$ in $T$.  It follows from basic facts about trees, that for $k$ sufficiently large, $\alpha(g) \cap \alpha(p)$ is a segment of length $> l(p)$ (i.e., its $p$-translates cover $\alpha(p)$),  and $\alpha(g) \cap \alpha(h)$ is a segment of length $> l(h)$ (i.e., its $h$-translates cover $\alpha (h)$).  From this we conclude that $min\langle v,g \rangle$ intersects $M'$ in an infinite strip containing corners,  and it intersect  $M$ is an infinite strip whose $h$-translates cover $M$.  The second statement of the lemma follows.
\end{proof}


\section{Distances between minsets}

In this section we will show that the length function determines the distances between $\Z^2$-minsets.  

\begin{theorem}\label{distance}  Let $G,H$ be maximal $\Z^2$-subgroups of $\AG$ with minsets $M_G,M_H$.
\begin{enumerate}
\item If $M_G \cap M_H \neq \emptyset$, then $ l(gh) \leq l(g) + l(h)$ for all $g \in G, h \in H$.
\item If $M_G \cap M_H= \emptyset$, then the distance $d$  between $M_G$ and $M_H$ satisfies
$$2d = \sup \{ l(gh) - l(g)-l(h) \mid g \in G, h\in H \} >0.$$
\end{enumerate}
\end{theorem}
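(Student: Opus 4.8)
The plan is to work entirely inside the flats $M_G \cong \E^2$ and $M_H \cong \E^2$ and to relate $l(gh)$ to the displacement of a point under the composite isometry. Recall that for a hyperbolic isometry $\phi$ acting on a CAT(0) space, $l(\phi) = \inf_x d(x,\phi x)$, and the minset of $\phi$ is where this infimum is realized. The key algebraic input is that $gh$ translates a point $x$ by first applying $h$, then $g$; I would estimate $d(x,ghx)$ by choosing $x$ cleverly and using the triangle inequality together with the fact that $g$ and $h$ act as translations of the Euclidean grids on $M_G$ and $M_H$ respectively.

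\textbf{Proof of (1).}
First I would handle the intersecting case, which should be the easy direction. Pick a point $x \in M_G \cap M_H$. Since $x \in M_H$, the element $h$ translates $x$ along an axis, so $d(x,hx) = l(h)$ and $hx \in M_H$ as well. Now I want to bound $d(hx, ghx)$. The obstacle is that $ghx = g(hx)$ and $hx$ need not lie in $M_G$, so $g$ may displace it by more than $l(g)$. To get around this, I would instead estimate
$$
l(gh) \le d(x, ghx) \le d(x,gx) + d(gx, ghx) = l(g) + d(x,hx) = l(g) + l(h),
$$
using that $x \in M_G$ (so $d(x,gx) = l(g)$), that $g$ is an isometry (so $d(gx,ghx) = d(x,hx)$), and that $x \in M_H$ (so $d(x,hx) = l(h)$). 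This establishes the inequality for all $g \in G$, $h \in H$.

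\textbf{Proof of (2).}
For the disjoint case I expect the main work to lie in proving the supremum is \emph{attained} and equals $2d$. I would start from a spanning geodesic: by Lemma~\ref{angles} there is a spanning geodesic $\alpha$ between $M_G$ and $M_H$ of length $d$, with endpoints $p \in M_G$, $q \in M_H$, meeting each flat at angle $\ge \pi/2$. I would choose $g \in G$, $h \in H$ to be gridline isometries translating parallel to $\alpha$ within their respective flats — concretely, pick axes in $M_G$ and $M_H$ running in the direction of $\alpha$'s endpoints, so that $gp$ and $h^{-1}q$ move in coordinated directions. The strategy is to show $d(x, ghx) = l(g) + 2d + l(h)$ for a suitable $x$, giving $l(gh) \ge l(g) + l(h) + 2d$, and to show no choice does better, giving equality in the supremum. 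For the lower bound I would build a path $x \to gx \to \cdots$ that traverses $\alpha$, crosses between the flats, and returns, picking up $l(g)$, two copies of $d$, and $l(h)$; the angle condition from Lemma~\ref{angles} guarantees the concatenation is itself geodesic (via the Flat Quadrilateral Theorem, as in Lemma~\ref{branch}, the relevant region is a Euclidean rectangle). For the upper bound, that $l(gh) - l(g) - l(h) \le 2d$ for every $g,h$, I would argue that any path realizing $d(x,ghx)$ must cross from a neighborhood of $M_G$ to a neighborhood of $M_H$ and back, so it must pay at least $2d$ in transit beyond the translation lengths; here the disjointness (hence $d>0$) is what forces the strict positivity of the supremum.

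\textbf{The main obstacle.}
The hard part will be the upper bound in (2): showing that $l(gh) \le l(g) + l(h) + 2d$ uniformly. For a single well-chosen pair this is a direct rectangle computation, but the supremum ranges over \emph{all} $g,h$, including elements with large translation lengths and axes in skew grid directions. The difficulty is that the flats $M_G$, $M_H$ carry independent grid structures, so $g$ and $h$ may translate along directions that are not aligned with the spanning geodesic $\alpha$, and one must show that misaligned translations only \emph{increase} $l(gh) - l(g) - l(h)$ toward (never beyond) $2d$, so that the supremum is achieved precisely by the aligned gridline isometries. I would control this by projecting onto the Euclidean strip of spanning geodesics between $M_G$ and $M_H$ (which exists by the Flat Quadrilateral Theorem, compare Lemma~\ref{branch}) and comparing displacements coordinate-wise in that strip, reducing the estimate to elementary planar trigonometry.
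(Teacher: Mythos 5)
Your part (1) is exactly the paper's argument and is correct. The problems are in part (2), where you have the difficulty located in the wrong place, and the step you treat as routine is precisely where the paper has to work hardest.

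First, the direction you call ``the main obstacle'' --- the uniform bound $l(gh)\le l(g)+l(h)+2d$ --- is immediate and needs no projection or trigonometry: with $x,y$ the endpoints of a spanning geodesic,
$$l(gh)\ \le\ d(x,ghx)\ \le\ d(x,gx)+d(gx,gy)+d(gy,ghy)+d(ghy,ghx)\ =\ l(g)+l(h)+2d,$$
since $x\in M_G$, $y\in M_H$, and $g,gh$ are isometries. So misaligned $g,h$ can never push the quantity past $2d$; four triangle inequalities settle it for all $g,h$ at once.

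The genuine difficulty is the opposite inequality: producing $g,h$ with $l(gh)$ close to $l(g)+l(h)+2d$, and your proposed argument for it has a real gap. You need the concatenation $[g^{-1}x,x,y,hy,\dots]$ to be geodesic, and for that the angle \emph{in the link} $link(x,X)$ between $\gamma_x$ and \emph{both} rays of the chosen $g$-axis must be $\ge\pi$; Lemma \ref{angles} only gives $\ge\pi/2$ against the flat, and the Flat Quadrilateral Theorem (which in Lemma \ref{angles} compares two spanning geodesics) does not upgrade this. Worse, a ``gridline isometry translating parallel to $\alpha$'' with the required property need not exist at all: when the endpoint is what the paper calls type 2, the unique pair of antipodal directions at distance $\pi$ from $\gamma_x$ is generally an irrational direction in the flat $M_G$ (edge lengths vary from cell to cell), so \emph{no} element of $G$ has an axis there. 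This is exactly why the paper resorts to Dirichlet approximation, develops the piecewise axis onto the plane, and runs the slit/bending analysis to find a true $gh$-axis within $\epsilon$ of the piecewise axis --- concluding only that the supremum is approached, not attained. Your claim that $d(x,ghx)=l(g)+l(h)+2d$ holds exactly for a well-chosen pair is false in general; and note also the logical slip that computing $d(x,ghx)$ for one point $x$ can only ever give an \emph{upper} bound on $l(gh)$ --- the lower bound requires exhibiting a $gh$-invariant geodesic, which is precisely the unproved geodesicity of your concatenation.
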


We begin by establishing some notation.
\begin{itemize}
\item For a geodesic segment $\phi$ from $x_1$ to $x_2$ write  $\phi=[x_1,x_2]$ and $\overline\phi = [x_2, x_1]$.
\item For an element $w \in \AG$, let $\phi^w=[wx_1,wx_2]$, the translate of $\phi$ by $w$.  
\item If $\phi=[x_1,x_2]$ and $\psi=[x_2,x_3]$, denote the piecewise geodesic $\phi \cdot\psi$ by $[x_1,x_2,x_3]$. 
\item  Denote by $\phi_{x_i}$ the tangent vector to $\phi$ at $x_i$, viewed as a point in $link(x_i,X)$.
\item For two geodesic segments $\alpha=[x,y]$ and $\beta=[x,z]$, the \emph{angle between $\alpha$ and $\beta$} is the distance between $\alpha_x$ and $\beta_x$ in $link(x,X)$.   (In particular, our angles can be greater than $\pi$.)  
\end{itemize}

To motivate the proof of the theorem, let us recall what happens in the case of a free group $F$ acting on a tree $T$. If the axes for two elements $g,h \in F$ do not intersect, 
then the distance between them is exactly $\frac{1}{2}[ l(gh)-l(g)-l(h)]$.  To prove this, one notes that
the spanning geodesic $\gamma$ between the axes can be extended geodesically in either direction along the two axes.  It follows that if $x,y$ are the endpoints of $\gamma$, then the 
piecewise geodesic $[g^{-1}x,x,y,hy,hx,ghx]$ is, in fact,  geodesic (see Figure \ref{fig:tree}).  Since the last segment $[hx,ghx]$  is the $gh$-translate of the first segment $[g^{-1}x,x]$, translating this geodesic by powers of  $(gh)$, we obtain a $gh$-invariant line, namely an axis for $gh$.  The translation length along this axis is now easily seen to be $l(g)+l(h) + 2d$, where $d=$length$(\gamma)$.

\begin{figure}
\labellist
\small\hair 2pt
\pinlabel{$g^{-1}x$} at 20 203 
\pinlabel{$x$} at 120 198 
\pinlabel{$hx$} at 262 138
\pinlabel{$hgx$} at 355 138
\pinlabel{$y$} at 120 24
\pinlabel{$hy$} at 262 24
\endlabellist
\begin{center}
\includegraphics[width=3.0in]{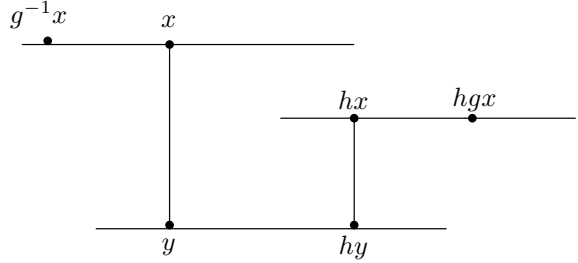}
\end{center}
\caption{Axis of $gh$ in a tree} 
\label{fig:tree}
\end{figure}

 To imitate this argument in the 2-dimensional setting,  we can again start with a spanning geodesic $\gamma$ between a pair of non-intersecting minsets $M_G$ and $M_H$ with endpoints at vertices.  If $X$ is a standard cubical complex with all side lengths equal 1, then any geodesic between vertices crosses each cube at a rational slope and in every $\Z^2$-minset, lines of rational slope are axes of some element.  In this situation, we can extend $\gamma$ geodesically, as in the tree case, along axes for some $g \in G$ and $h \in H$  to obtain an axis for $gh$ with translation length 
  $l(g)+l(h) + 2d$ (see \cite{Mar}). 
 
 In the case of a general rectangle complex, the situation is more complicated. Since the shapes of the cubes can differ as we move from one minset to another,  the directions corresponding to axes will not necessarily be rationally related. Thus, we cannot always extend $\gamma$ geodesically along the axis of some element of $G$ or $H$. 
 Nonetheless, as we will see below, we can approximate the tree picture.  Namely, for $g \in G$ and $h \in H$, consider the piecewise geodesic $[g^{-1}x,x,y,hy,hx,ghx]$. Taking translates by powers of $gh$ gives a $gh$-invariant piecewise geodesic $\phi(g,h)$, which we will call the  \emph{piecewise axis} for the pair $g,h$.  We will prove that for any $\epsilon >0$, one can choose  $g$ and $h$ such that this piecewise axis has Hausdorff distance less than  $\epsilon$ from  a true $gh$-axis (i.e., they lie within $\epsilon$-neighborhoods of each other).
  
 The proof proceeds as follows.  We begin by developing $\phi(g,h)$ onto the plane.  We then show that for appropriate choices of $g$ and $h$, we can ``straighten" this piecewise geodesic in the plane so that the resulting line corresponds to an axis for $gh$ in $X$ with the desired property.

To develop the piecewise axis onto the plane,  we need to understand the structure of the links at the endpoints of a spanning geodesic.
Let $M$ be a 2-flat in $X$, $x \in M$, and $C=lk(x,M)$.  Then $C$ is a circle of length $2\pi$ in $lk(x,X)$.
Let $t \in lk(x,X)$ be a point at distance at least $\frac{\pi}{2}$ from $C$, and let  $\Theta$ be the smallest subgraph of $lk(x,X)$ which contains $C$ and $t$.  Note that $\Theta$ need not be connected.

\begin{figure}
\labellist
\small\hair 2pt
\pinlabel{\Large$\Theta_1$} at 130 25
\pinlabel{\Large$\Theta_2$} at 475 25
\endlabellist
\begin{center}
\includegraphics[width=4.8in]{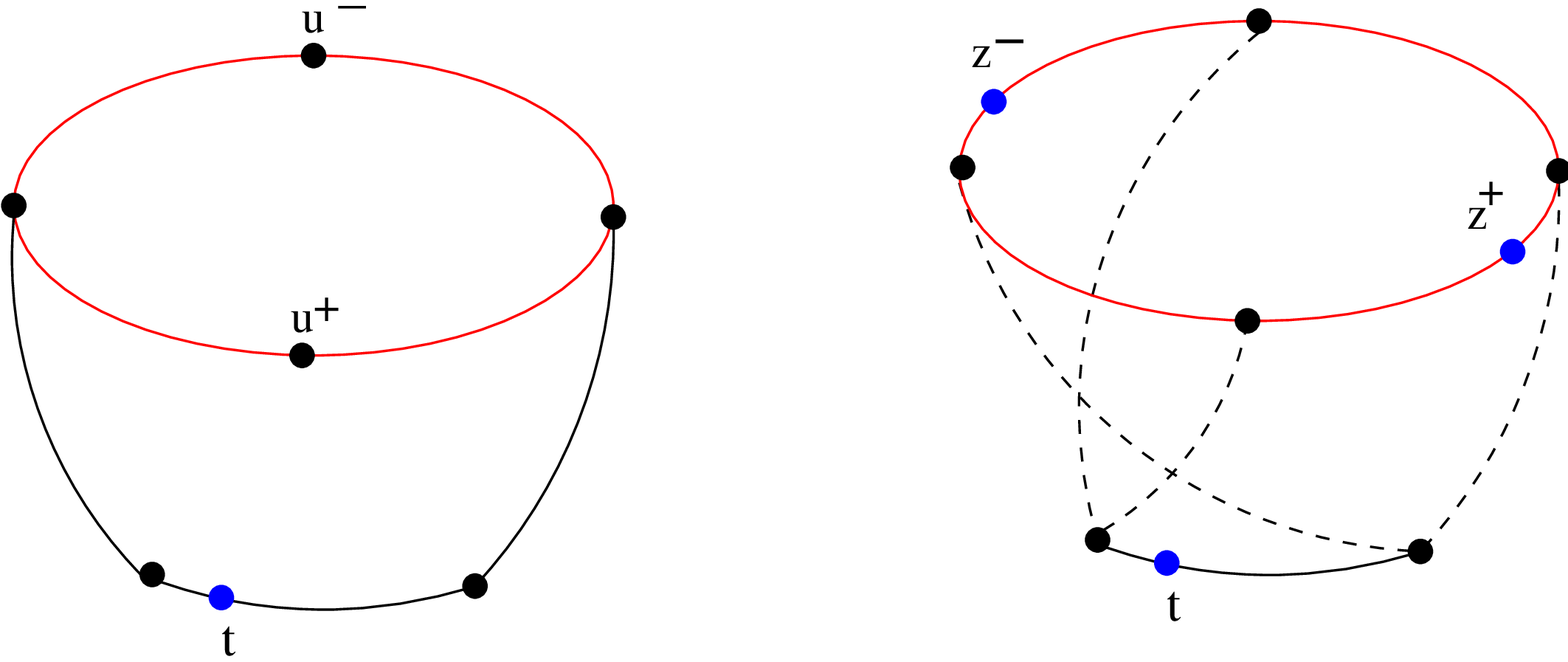}
\end{center}
\caption{Span of $t$ and $C$ in $link(x,X)$} 
\label{fig:links}
\end{figure}

\begin{lemma}\label{linktype}
  Let $\Theta_1$ and $\Theta_2$ be the graphs in Figure \ref{fig:links}.  Then 
 $\Theta$ is isomorphic to either $\Theta_1$ or to a subgraph of $\Theta_2$ (some or all of the dotted edges may be missing). 
\end{lemma}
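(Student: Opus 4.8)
The plan is to convert the metric hypothesis $d(t,C)\ge \frac{\pi}{2}$ into combinatorial data and then enumerate. Since $lk(x,X)$ is a flag metric graph with every edge of length $\frac{\pi}{2}$, it is triangle-free and the distance between any two distinct vertices is at least $\frac{\pi}{2}$. The circle $C$ has length $2\pi$, so it is a $4$-cycle; triangle-freeness makes it an \emph{induced} $4$-cycle, since a diagonal would produce a triangle. I label its vertices cyclically $a_1,a_2,a_3,a_4$, with antipodal pairs $\{a_1,a_3\}$ and $\{a_2,a_4\}$, and record the basic fact that for any vertex $v\notin C$ the set $N_C(v)$ of its neighbours on $C$ is independent in the $4$-cycle, hence is empty, a single vertex, or one of the two antipodal pairs.

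The first real step is to locate where $t$ can sit. A geodesic from a point off $C$ to its nearest point of $C$ must approach an edge-interior point along that same edge, so the nearest point of $C$ is always a vertex. Consequently, if $t$ is a vertex then $d(t,C)=\min_i d(t,a_i)\ge \frac{\pi}{2}$ automatically, while if $t$ lies in the interior of an edge $e$ then $d(t,C)\ge \frac{\pi}{2}$ holds exactly when both endpoints of $e$ lie off $C$ --- an endpoint on $C$ would leave $t$ within distance $<\frac{\pi}{2}$ of it. Thus the hypothesis is equivalent to the requirement that the ``new'' vertices of $\Theta$ --- namely $t$, or the two endpoints $p,q$ of the edge $e$ containing $t$ --- all lie off $C$, and $\Theta$ is the subgraph of $lk(x,X)$ spanned by $C$ together with these vertices.

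It then remains to enumerate the admissible adjacency patterns. When $t$ is a vertex, $N_C(t)$ is empty, a singleton, or an antipodal pair; the antipodal-pair case gives exactly $\Theta_1$, and the two smaller cases are subgraphs of $\Theta_2$. When $t$ lies on an edge $e=[p,q]$, triangle-freeness forces $N_C(p)$ and $N_C(q)$ to be disjoint (a shared neighbour $a_i$ would give a triangle $p\,q\,a_i$), and each is again empty, a singleton, or an antipodal pair. Running through the finitely many such pairs, and noting that the only edge among $\{p,q\}$ permitted by triangle-freeness is $e$ itself, I obtain precisely the subgraphs of $\Theta_2$, with the optional attaching edges $p\,a_i$ and $q\,a_j$ being the dotted edges of Figure~\ref{fig:links}. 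The disconnected possibilities, where $p,q$ attach to nothing on $C$, account for the earlier remark that $\Theta$ need not be connected.

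The one point requiring genuine care is the geometric input that the nearest point of $C$ is always a vertex, which is what lets me trade the metric condition for a statement about the positions of the new vertices; once that is in hand the classification is a short finite check. The remaining work is bookkeeping: I must confirm that the list of admissible patterns is complete and matches the two figures exactly --- that every pattern compatible with triangle-freeness and with the new vertices off $C$ is either isomorphic to $\Theta_1$ or a subgraph of $\Theta_2$ --- and that no extra edges (diagonals of $C$, or edges among the new vertices beyond $e$) can occur without creating a triangle.
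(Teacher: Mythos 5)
Your reduction of the metric hypothesis to combinatorics is correct and is essentially the same skeleton as the paper's argument: the nearest point of $C$ to $t$ is a vertex, so $d(t,C)\ge\frac{\pi}{2}$ says exactly that the vertices of the cell carrying $t$ lie off $C$; triangle-freeness of $lk(x,X)$ makes $C$ an induced $4$-cycle, forces each $N_C(\cdot)$ to be empty, a singleton, or an antipodal pair, and forces $N_C(p)\cap N_C(q)=\emptyset$. Your list of admissible patterns is therefore the right list. The error is in the final matching with Figure~\ref{fig:links}, and it is not cosmetic. In the paper, $\Theta_1$ is the configuration in which $t$ lies in the \emph{interior of an edge} $e=[p,q]$ and $p$, $q$ are joined to a pair of antipodal vertices of $C$, one each --- the pattern $(\{a_1\},\{a_3\})$ in your notation. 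This is a theta graph (two vertices joined by three paths of lengths $2,2,3$), and the paper's point is that triangle-freeness forbids any further edges in this case. The graph $\Theta_2$, with all dotted edges present, is the pattern $(\{a_1,a_3\},\{a_2,a_4\})$, i.e.\ $K_{3,3}$; its subgraphs account for all remaining patterns, including every case where $t$ is a vertex.

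Your classification is incompatible with this in two concrete ways. First, you declare the vertex case $N_C(t)=\{a_1,a_3\}$ to be ``exactly $\Theta_1$''; that graph is $K_{2,3}$, which \emph{is} a subgraph of $K_{3,3}=\Theta_2$, so under your assignment $\Theta_1$ would be subsumed by $\Theta_2$ and the lemma's dichotomy would be vacuous --- a warning sign that the identification is off. Second, and decisively, you claim that every pattern with $t$ interior to $e$ embeds in $\Theta_2$. This fails for the antipodal-singleton pattern $(\{a_1\},\{a_3\})$: that graph contains the $5$-cycle $a_1\,p\,q\,a_3\,a_2$, hence is not bipartite, whereas $\Theta_2=K_{3,3}$ and all of its subgraphs are bipartite. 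So the one genuinely exceptional configuration --- the one the lemma isolates as $\Theta_1$ --- is precisely the case your ``finite check'' misfiles into $\Theta_2$. The repair is pure bookkeeping: the pattern $(\{a_1\},\{a_3\})$ is $\Theta_1$, and every other pattern (including all vertex cases) embeds in $\Theta_2$, because in those cases $N_C(p)$ and $N_C(q)$ can be taken to lie in the two distinct antipodal pairs $\{a_1,a_3\}$ and $\{a_2,a_4\}$ respectively.
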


\begin{proof}  If $t$ lies in the interior of an edge $e$ whose endpoints are connected to antipodal points in $C$ as in $\Theta_1$, then $\Theta$ must be isomorphic to $\Theta_1$. This is because no other edges can be attached to these vertices without creating a 3-cycle.  
Otherwise, either one endpoint of $e$ is not connected to $C$ by an edge, or the endpoints are connected to a pair of adjacent vertices.  In this case, $\Theta$ must be a subgraph of $\Theta_2$.
(For example, if the distance from $t$ to $C$ is greater than $\pi$, then $\Theta$ consists of just $C$ and $e$, so all of the dotted edges are missing.)

In case $t$ is itself a vertex, $\Theta$  contains only $C$ and the edges (if any) connecting $t$ to $C$.  In this case $\Theta$ is again isomorphic to a subgraph of $\Theta_2$.
\end{proof}

Suppose $\rho$ is a geodesic segment in $X$ which is transverse to every edge it crosses.  
Then any segment of $\rho$ not containing a vertex has a neighborhood isometric to a Euclidean strip.  This strip can be continued across a vertex $v$ if and only if the incoming and outgoing tangent vectors  $\rho_v^{\pm}$ lie in a circle of radius $2\pi$, i.e. a 4-cycle, in $link(v,X)$.  On the other hand, if they are not contained in a 4-cycle,  then
the tangent vectors can be perturbed toward a ``missing edge" of the 4-cycle and still remain distance $\geq \pi$ apart. In other words,  $\rho$ can be ``bent" at $v$ in at least one direction and still remain geodesic. We think of such a missing edge as  a ``slit" in the Euclidean strip about $\rho$, and we will talk about bending $\rho$ towards the slit.

Now let $\gamma$ be a spanning geodesic between two $\Z^2$-minsets  $M_G=min(G)$ and $M_H=min(H)$ with endpoints at vertices $x \in M_G$ and $y \in M_H$.    Let $\gamma_1=[x,x_1]$ (respectively $\gamma_2=[y_2,y]$) be the longest initial (respectively final) segment of $\gamma$ which has a neighborhood in $X$ isometric to a Euclidean strip.  Note that it is possible (for example if $\gamma$ lies in a single minset) that $\gamma_1=\gamma_2=\gamma$.  In this case we say $\gamma$ is \emph{unbendable}.  Otherwise, it is \emph{bendable} and we write  $\gamma=\gamma_1\gamma_0\gamma_2$.  The middle segment, $\gamma_0$, may reduce to a single point if $x_1=y_1$.

Ideally, we would like to find an isometry $g\in G$ such that at $x$, both rays of the $g$-axis through $x$ form geodesic extensions of $\gamma$, or in other words, the pair of antipodal points in $link(x,X)$ corresponding to the $g$-axis are distance at least $\pi$ from $\gamma_x$.  
Let $t=\gamma_x$ and note that since $\gamma$ is a spanning geodesic, $t$ is distance at least $\frac{\pi}{2}$ from $C=lk(x,M_G)$.  Define $\Theta$, as above, to be the smallest subgraph of $lk(x,X)$ which contains $C$ and $t$. Then by Lemma \ref{linktype}, $\Theta$ is isomorphic to either $\Theta_1$ or a subgraph of $\Theta_2$.  We say that $x$ is of \emph{type 1} or \emph{type 2} accordingly.  In the latter case, we choose an identification of $\Theta$ with a subgraph of $\Theta_2$ and call the edges of $\Theta_2$ not contained in $\Theta$ \emph{fictitious edges}. 

If $x$ is  of type 1, or if $t$ is a vertex, then there is a pair of antipodal vertices on $C$,  $u^{\pm},$ at distance at least $\pi$ from $t$.  Choosing $g$ to be a gridline isometry in direction $u^{\pm}$, gives an axis satisfying the desired condition, so if $\alpha$ is the geodesic from $x$ to $gx$, then $\overline\gamma\cdot\alpha\cdot\gamma^g =[y,x,gx,gy]$ is geodesic.

However, if $x$ is of type 2 and $t$ is not a vertex,  there are only two possible pairs of antipodal points at distance $\pi$ from $t$ in $\Theta_2$ and there is no guarantee that these pairs correspond to the axis of some $g \in G$. The best we can do is to choose $g$ with axis close these antipodal points. 

\begin{figure}
\labellist
\small\hair 2pt
\pinlabel{$\alpha^*$} at 165 100
\pinlabel{$gL$} at 135 135
\pinlabel{$gx$} at 325 135
\pinlabel{$gx_1$} at 395 135
\pinlabel{$L$} at 205 46
\pinlabel{$x_1$} at 20 47
\pinlabel{$x$} at 77 47
\endlabellist
\begin{center}
\includegraphics[width=3.5in]{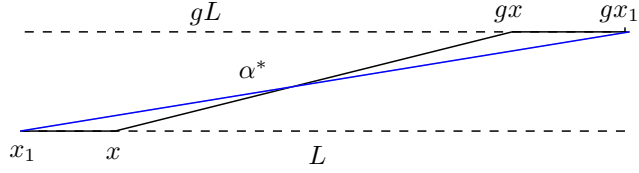}
\end{center}
\caption{$\alpha^*$ with no slits} 
\label{fig:develop}
\end{figure}

\begin{lemma}\label{alpha*} 
 For $g \in G$, let $\alpha$ denote the geodesic from $x$ to $gx$ in $M_G$.  Then for any $\epsilon >0$, there exists $g \in G$ such that the Hausdorff distance between the geodesic 
 $\alpha^*=[x_1,gx_1]$ and the piecewise geodesic  $\overline{\gamma_1}\cdot \alpha\cdot  \gamma_1^g= [x_1,x,gx,gx_1]$ is less than $\epsilon$.  Moreover, the angles
between $\alpha^*$  and this piecewise geodesic at the endpoints are also less than $\epsilon$.
\end{lemma}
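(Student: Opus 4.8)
The plan is to develop the piecewise geodesic $\overline{\gamma_1}\cdot\alpha\cdot\gamma_1^g = [x_1,x,gx,gx_1]$ onto the Euclidean plane and to choose $g$ so that the development is nearly straight. Since $G\cong\Z^2$ acts cocompactly by translations on the flat $M_G\cong\E^2$, the axis directions $\{\alpha_x : g\in G\}$, viewed as points of the circle $C=lk(x,M_G)$, form a dense subset of $C$, and for any prescribed direction and any $N$ one may realize a nearby direction by some $g$ with $l(g)>N$ (so, harmlessly, $l(g)$ may be taken as large as we like). The geometric input I would extract from Lemma \ref{linktype} is that, since $x$ is of type 2, the point $t=\gamma_x$ has a pair of antipodal points $u^\pm\in C$ with $d_{lk(x,X)}(t,u^\pm)=\pi$. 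Approximating $u^+$ by an axis direction (using density together with continuity of $d_{lk(x,X)}(t,\cdot)$ on $C$), and noting that the antipode of the approximating direction then approximates $u^-$, I can arrange, for any $\delta>0$, that the angle at $x$ between $\overline{\gamma_1}$ and $\alpha$, namely $d_{lk(x,X)}(t,\alpha_x)$, lies within $\delta$ of $\pi$; by $G$-equivariance the angle at $gx$ between $\alpha$ and $\gamma_1^g$ does as well.

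The second step is an elementary planar computation. Developing $[x_1,x,gx,gx_1]$ into $\E^2$ with $\alpha$ along a segment of length $\ell=l(g)$, the two end-segments $\overline{\gamma_1}$ and $\gamma_1^g$ each have the fixed length $|\gamma_1|$ and meet $\alpha$ at angles within $\delta$ of $\pi$; since $\gamma_1^g=g\gamma_1$ and $g$ develops as a translation, these two segments are parallel, so the developed endpoints $x_1$ and $gx_1$ sit at a common height $h=|\gamma_1|\sin\delta \le |\gamma_1|\delta$ above the line carrying $\alpha$. Hence the developed segment $[x_1,gx_1]$ is parallel to $\alpha$, the developed piecewise path lies within vertical distance $h$ of it, and the angle between the two at each endpoint equals the angular defect, at most $\delta$. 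Crucially these bounds are independent of $\ell$, because the deviation is controlled by the length of the \emph{short} end-segments and not by the long middle segment $\alpha$. Choosing $\delta<\epsilon/\max(1,|\gamma_1|)$ then makes both the Hausdorff distance and the two endpoint angles less than $\epsilon$ in the development.

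The final, and most delicate, step is to transfer these planar estimates back to $X$, i.e. to identify the developed straight segment with the actual geodesic $\alpha^*=[x_1,gx_1]$. A neighborhood of the path is isometric to a neighborhood of its development away from the breakpoints (Euclidean strips along $\overline{\gamma_1}$ and $\gamma_1^g$, and the flat $M_G$ along $\alpha$); the task is to show that, for the chosen $g$, the genuine geodesic $\alpha^*$ remains inside this developable neighborhood and therefore coincides with the straight developed segment. This is exactly where the ``slit'' analysis enters (Figure \ref{fig:develop} is drawn in the case of no slits), and I expect it to be the main obstacle: I must rule out that $\alpha^*$ takes a shortcut by bending at a slit somewhere along the path. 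In the no-slit case the identification is immediate and the planar estimates yield the lemma verbatim; in general I would instead argue by a CAT(0) comparison, using that $[x_1,x,gx,gx_1]$ is a local geodesic except for defects of size at most $\delta$ concentrated at $x$ and $gx$, to conclude that $\alpha^*$ lies within $\epsilon$ of the path with endpoint angles at most $\epsilon$ — the bounds again being uniform in $l(g)$ precisely because the turning is paid for only against the bounded end-segments $\gamma_1,\gamma_1^g$.
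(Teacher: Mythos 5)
Your steps 1 and 2 are essentially the paper's own opening moves: the paper likewise uses Lemma \ref{linktype} to extract an antipodal pair $z^{\pm}\subset C$ at link-distance at least $\pi$ from $t=\gamma_x$ (note you should claim $d(t,u^{\pm})\geq\pi$ rather than $=\pi$, since missing dotted edges of $\Theta_2$ only increase link distances --- this only helps), and it uses Dirichlet approximation, your density argument, to choose $g$ with the tangents of $\alpha$ near $z^{\pm}$; Figure \ref{fig:develop} is exactly your planar picture. The genuine gap is your step 3, and you flag it yourself without closing it. The issue is not merely technical: because $link(x,X)$ is a graph rather than a circle, a planar development faithfully records the angle on a given side of the path only when the corresponding $4$-cycle is actually present in the link; where edges are fictitious, the planar angle \emph{underestimates} the true angle. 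Proving this one-sided inequality, recording the defects as slits, and then straightening inside the slit strip (checking that a bend toward a slit is still locally geodesic in $X$) is the entire content of the paper's proof; your write-up stops exactly where that work begins, so as it stands the lemma is not proved.

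That said, the alternative you name --- a synthetic CAT(0) comparison with bounds uniform in $l(g)$ --- is genuinely viable, and once executed it proves the lemma with no development or slit bookkeeping at all. Set $m=\mathrm{length}(\gamma_1)$ and $\ell=d(x,gx)$. In a piecewise Euclidean CAT(0) complex the Alexandrov angle is the link distance truncated at $\pi$, so your step 1 gives angles $\geq\pi-\delta$ at $x$ and $gx$. The CAT(0) law of cosines ($d(p,r)^2\geq a^2+b^2-2ab\cos\angle_q$) then yields $d(x,gx_1)\geq \ell+m-m\delta^2$; a comparison-triangle computation gives $\angle_x(gx,gx_1)=O(m\delta/\ell)$, so by the triangle inequality for Alexandrov angles $\angle_x(x_1,gx_1)\geq\pi-2\delta$ once $\ell$ is large, and a second application gives $d(x_1,gx_1)\geq\ell+2m-5m\delta^2$. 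Feeding these length defects back into Euclidean comparison triangles bounds the angles between $\alpha^*$ and the path at $x_1$ and $gx_1$ by $O(\delta)$, and bounds the distance from the breakpoints $x,gx$ to the relevant chords by $O(m\delta)$ (a Heron-type height estimate); convexity of the distance function to a geodesic then upgrades this to Hausdorff distance $O(m\delta)$. All constants are independent of $\ell$, which is your (correct) key observation that the turning is paid for only against the short end-segments; taking $\delta$ small relative to $\epsilon/(1+m)$ finishes. Two remarks on the comparison with the paper: this route is arguably cleaner, but the paper's harder slit-strip construction is not wasted effort --- it controls \emph{which way} $\alpha^*$ bends (only toward slits), and that refinement is what the paper uses immediately after the lemma to conclude that $\overline\gamma_0\cdot\alpha^*\cdot\gamma_0^g$ remains geodesic in the bendable case, i.e., Proposition \ref{bendable}. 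A bare comparison argument proves Lemma \ref{alpha*} as stated but does not recover that additional control.
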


\begin{proof}  As observed above, if $x$ is of type 1 of  $t = \gamma_x$  is a vertex, we can take $g$ so that $\alpha^* = \overline{\gamma_1}\cdot \alpha\cdot  \gamma_1^g$.  So assume that $x$ is of type 2 and $t$ lies in the interior of some edge of $theta$.   Choose a pair of antipodal points $z^{\pm}$  in $C$ at distance at $\pi$ from $t$.  These points are the tangent vectors to a line $L$ in $M_G$.   

By the Dirichlet's Approximation Theorem, we can find a lattice point $gx$ (i.e., a point in the $G$-orbit of $x$) arbitrarily close to $L$.  Moreover, this point can be taken to be arbitrarily far from $x$, thus the angle between $\alpha$ and $L$ at $x$ is arbitrarily small, as is the angle between $\alpha$ 
and $gL$ at $gx$, i.e. the tangent vectors to $\alpha$ lie arbitrarily close to $z^{\pm}$.  

We wish to develop the piecewise geodesic  $\overline{\gamma_1}\cdot \alpha\cdot  \gamma_1^g$ onto the plane so that the planar angles at $x$ and $gx$ are less than or equal to the actual angles in $X$.  Each of the 3 geodesic segments lies in a Euclidean strip, so the key is to determine how to assemble these strips at $x$ and $gx$.  For this, we use the identification of $\Theta$ with (a subgraph of) $\Theta_2$.  Namely,  we think of the 4-cycle  spanned by $t$ and $z^+$ as the link of $x$ in the plane, and the 4-cycle spanned by $t$ and $z^-$ as the link of $gx$ in the plane.  Note that the lines $L$ and $gL$ are parallel in $M_G$, so the segments $[x_1,x]$ and $[gx,gx_1]$ appear as parallel segments when developed onto the plane (see Figure \ref{fig:develop}).

Fictitious edges, if any, are indicated by slits (see Figure \ref{fig:slit}). Note that slits at $x$ are caused by missing edges in the 4-cycle spanned by $t$ and $z^+$ whereas slits at $gx$ are caused by
missing edges in the 4-cycle spanned by $t$ and $z^-$. Thus, it is possible to have slits at one of these points but not the other.   The planar angle measured across a slit is strictly less than the corresponding distance in the link of $x$ (or $gx$) in $X$.

The strip between $L$ and $gL$  in the plane, thus corresponds to a slit Euclidean strip in $X$ containing $\overline\gamma_1\cdot \alpha \cdot \gamma_1^g$.
If $\overline\gamma_1\cdot \alpha \cdot \gamma_1^g$ bends toward a slit, then it is already locally geodesic at that point.  If not, then it can be straightened to a geodesic $\alpha^*$ in $X$ without leaving this strip.  The lemma follows. 
\end{proof}

\begin{figure}
\labellist
\small\hair 2pt
\pinlabel{$\alpha^*$} at 280 90
\pinlabel{$gL$} at 175 135
\pinlabel{$gx$} at 325 135
\pinlabel{$gx_1$} at 395 135
\pinlabel{$L$} at 205 46
\pinlabel{$x_1$} at 20 47
\pinlabel{$x$} at 77 47
\endlabellist
\begin{center}
\includegraphics[width=3.5in]{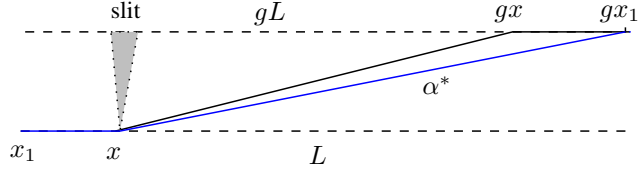}
\end{center}
\caption{$\alpha^*$ with a slit at $x$} 
\label{fig:slit}
\end{figure}

Note that we can choose $g$ so that $gx$ lies on either side of $L$. In particular, if $\gamma$ is bendable, so there are slits at $x_1$ and $gx_1$,  we can choose $g$ so that straightening 
$\overline\gamma_1\cdot \alpha \cdot \gamma_1^g$ bends it toward the slit and hence,  
$\overline\gamma_0 \cdot \alpha^* \cdot \gamma_0^g$ is still geodesic.

We can also apply the lemma at the other endpoint $y$ of $\gamma$ to find $h \in H$ with geodesic $\beta$ from $y$ to $hy$ such that the geodesic $\beta^*=[y_1,hy_1]$  is $\epsilon$-close to 
${\gamma_2}\cdot \beta\cdot  \overline\gamma_2^h=[y_1,y,hy,hy_1]$ and if $\gamma$ is bendable, then $\gamma_0\cdot \beta^* \cdot\overline\gamma_0^h$ is geodesic. Putting these together we conclude

\begin{proposition}\label{bendable}
If  $\gamma$ is bendable, then there exist $g \in G, h \in H$ such that
\begin{eqnarray}\label{eqn1}
(\beta^*)^{h^{-1}}\cdot \overline\gamma_0 \cdot \alpha^* \cdot \gamma_0^g \cdot (\beta^*)^{g}
\end{eqnarray}
 is geodesic.  Its $gh$-translates form a $gh$-axis that lies within Hausdorff distance $\epsilon$ of the piecewise axis 
\begin{eqnarray}\label{eqn2}
\phi(g,h)= \cdots \beta^{h^{-1}}\cdot \overline{\gamma}\cdot\alpha\cdot\gamma^g \cdot\beta^g  \cdots
\end{eqnarray} 
  \end{proposition}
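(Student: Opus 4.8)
The plan is to show that the piecewise geodesic in \eqref{eqn1} is in fact a genuine geodesic by checking local geodesicity, and then to promote it to a $gh$-axis using the fact that its last segment is the $gh$-translate of its first. Throughout I write $\gamma=\gamma_1\gamma_0\gamma_2$ with $\gamma_1=[x,x_1]$, $\gamma_0=[x_1,y_1]$, $\gamma_2=[y_1,y]$, and I use that $\alpha^*=[x_1,gx_1]$ and $\beta^*=[y_1,hy_1]$ are the straightened geodesics produced by Lemma \ref{alpha*}. The key inputs are the two geodesics guaranteed by the remark following that lemma: by choosing $g$ appropriately, $\overline{\gamma_0}\cdot\alpha^*\cdot\gamma_0^g=[y_1,x_1,gx_1,gy_1]$ is geodesic, and by choosing $h$ appropriately, $\gamma_0\cdot\beta^*\cdot\overline{\gamma_0}^h=[x_1,y_1,hy_1,hx_1]$ is geodesic. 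Since $g\in G$ and $h\in H$ are chosen independently, both conditions can be imposed simultaneously.

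First I would note that \eqref{eqn1} is a piecewise geodesic from $h^{-1}y_1$ to $ghy_1$ whose consecutive segments meet correctly, with four interior junctions occurring at $y_1$, $x_1$, $gx_1$, and $gy_1$. Each constituent segment is itself geodesic (the $\gamma_0$-pieces lie on $\gamma$, while $\alpha^*,\beta^*$ are geodesics), so by the standard fact that a local geodesic in a CAT(0) space is a geodesic (\cite{BH}), it suffices to verify that the incoming and outgoing directions at each of the four junctions are at distance $\geq\pi$ in the relevant link. The junctions at $x_1$ and $gx_1$ are handled directly by the geodesic $\overline{\gamma_0}\cdot\alpha^*\cdot\gamma_0^g$. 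For the remaining two I would exploit equivariance: applying the isometry $h$ carries the junction at $y_1$ (between $(\beta^*)^{h^{-1}}$ and $\overline{\gamma_0}$) to the junction at $hy_1$ of the geodesic $\gamma_0\cdot\beta^*\cdot\overline{\gamma_0}^h$, while applying $g^{-1}$ carries the junction at $gy_1$ (between $\gamma_0^g$ and $(\beta^*)^g$) to the junction at $y_1$ of that same geodesic. As isometries preserve local geodesicity, all four junctions pass and \eqref{eqn1} is geodesic.

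Next I would observe that $gh\cdot(\beta^*)^{h^{-1}}=(\beta^*)^g$, so the last segment of \eqref{eqn1} is the $gh$-translate of its first; thus \eqref{eqn1} consists of one fundamental domain for the $\langle gh\rangle$-action together with the first segment of the next. Writing $P$ for the path in \eqref{eqn1}, the bi-infinite concatenation $\bigcup_{n}(gh)^n\cdot P$ is therefore $gh$-periodic, and each of its junctions is a $gh$-translate of one of the four junctions already shown to be geodesic. Hence this line is locally geodesic at every point, so it is a geodesic line on which $gh$ acts by translation, i.e.\ a $gh$-axis.

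Finally, the Hausdorff estimate follows from a piece-by-piece comparison with $\phi(g,h)$ in \eqref{eqn2}. Expanding $\gamma=\gamma_1\gamma_0\gamma_2$ in one period of $\phi(g,h)$ and regrouping, the $\gamma_0$-translate segments appear identically in both paths, the role of $\alpha^*$ is played by $\overline{\gamma_1}\cdot\alpha\cdot\gamma_1^g$, and the role of $\beta^*$ by $\gamma_2\cdot\beta\cdot\overline{\gamma_2}^h$. Lemma \ref{alpha*} guarantees each $\alpha^*$ is within $\epsilon$ of $\overline{\gamma_1}\cdot\alpha\cdot\gamma_1^g$ and, applied at $y$, each $\beta^*$ is within $\epsilon$ of $\gamma_2\cdot\beta\cdot\overline{\gamma_2}^h$; translating these estimates by the relevant powers of $g$ and $h$ shows the axis lies within Hausdorff distance $\epsilon$ of $\phi(g,h)$. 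The main obstacle is the junction analysis of the second paragraph: the angle conditions at $y_1$ and $gy_1$ are not of the form directly supplied by Lemma \ref{alpha*}, and the crux is recognizing that the equivariant translates by $h$ and $g^{-1}$ convert them into exactly the two angle conditions built into the construction.
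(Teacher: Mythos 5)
Your proof is correct and takes essentially the same route as the paper: the paper obtains this proposition precisely by the assembly you describe, namely Lemma~\ref{alpha*} together with the remark on bending toward slits, applied independently at $x$ (to choose $g$) and at $y$ (to choose $h$), and then ``putting these together.'' Your write-up simply makes explicit what the paper leaves implicit --- the equivariant transfer of the angle conditions to the junctions at $y_1$ and $gy_1$, the CAT(0) local-to-global principle, the periodicity argument via $gh\cdot(\beta^*)^{h^{-1}}=(\beta^*)^g$, and the piecewise Hausdorff comparison with $\phi(g,h)$ --- which is exactly the intended argument.
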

  
  We are now ready to prove Theorem \ref{distance}.

\begin{proof}[Proof of Theorem \ref{distance}]
First assume that $M_G \cap M_H \neq \emptyset$ and let $x$ be a point in this intersection.  Then for any $g \in G$, $h \in H$, $l(gh) \leq d(x,ghx) \leq d(x,gx) +d(gx,ghx) = l(g)+l(h)$.  This proves the first statement of the theorem.

Now assume $M_G \cap M_H = \emptyset$ and let $d$ be the distance between them. Let $\gamma$ be a spanning geodesic from $M_G$ to $M_H$ whose endpoints $x,y$ are vertices.  
We first note that for any $g \in G$, $h \in H$, 
\begin{align*}
l(gh) &\leq  d(x,ghx)\\
& \leq  d(x,gx) + d(gx,gy) +d(gy,ghy) + d(ghy,ghx)\\
& =  l(g) + l(h) +2d.
\end{align*}

To prove the theorem,  it suffices to show that for any $\epsilon >0$, we can find $g, h$ such that the piecewise axis $\phi(g,h)$ lies within $\epsilon$ of some true axis for $gh$. For in this case, each segment of $\phi(g,h)$ has length at most $2\epsilon$ more than its projection on the axis, so
$$
 l(g) + l(h) +2d \leq  l(gh) + 8\epsilon.
$$

If $\gamma$ is bendable, then this follows from Proposition \ref{bendable}. If $\gamma$ is unbendable, but both endpoints of $\gamma$ are of type 1, then the piecewise axis $\phi(g,h)$
 is already geodesic, hence it is an axis for $gh$.  

So assume from now on that $\gamma$ is  unbendable and that $x$ is of type 2. 
 If $y$ is type 1, then
$\gamma\cdot\beta \cdot \overline\gamma^h$ is geodesic and has angles at $y$ and $hy$ strictly greater than $\pi$.  by Lemma \ref{alpha*}, $g$ can be chosen so that the geodesic $\alpha^*$
from $y$ to $gy$ makes arbitrarily small angles with $\overline\gamma$ and $\gamma^g$.  It follows that $ \beta^{h^{-1}}\cdot  \alpha^* \cdot \beta^g$ is geodesic and its translates form an axis for $gh$
satisfying the desired condition.

There remains the case that both links are of type 2.  
Arguing as in the proof of Lemma \ref{alpha*}, we can develop  $\overline\gamma \cdot \alpha \cdot \gamma^g$ onto the plane so that $\overline\gamma$ and $\gamma^g$ appear as parallel segments and lie in a Euclidean strip of width $\epsilon$. 
Moreover, if there is a slit at $x$, we can choose $g$ such that the bending at $x$ is towards the slit.
Since $y$ is also of type 2, we can do the same for $\gamma^{h^{-1}} \cdot \beta^{h^{-1}} \cdot \overline\gamma$ and hence likewise for $\gamma^g \cdot \beta^g \cdot \overline\gamma^{gh}$.  Continuing this process we can develop the entire piecewise axis $\phi(g,h)$ onto the plane.  Let  $\tilde\phi$ denote the image of $\phi(g,h)$ in the plane.  

The translates of $\gamma$ all appear parallel in $\tilde\phi$ and $\tilde\phi$ is invariant under the translation of the plane which takes $\gamma$ to $\gamma^{gh}$.  It follows that the convex hull of $\tilde\phi$ is a strip of width at most $2\epsilon$ bounded by a pair of parallel lines $L_1$ and $L_2$.  See Figure \ref{fig:develop2}.  This strip corresponds to a $gh$-invariant strip $E$, possibly with slits, in $X$. We will show that $E$ contains an axis for $gh$.  

If there are no slits at any vertex of $\tilde\phi$, then $L_1$ and $L_2$ lift to axes for $gh$ in $X$.  
Suppose there are slits at some vertices.  We may assume (by appropriate choice of $g$ and $h$) that  $\tilde\phi$ bends toward the slit at some vertex.  Among all such vertices, choose one, call it $v$, closest to a bounding line $L_1$ or $L_2$.  (In Figure \ref{fig:develop2}, for example, if both $gx$ and $gy$ had inward pointing slits, then we would choose $v=gx$.)   Say $v$ is closest to $L_1$ with the slit pointing downward.  Let $L_v$ be the straight line in the plane through $v$ and $ghv$.

Consider the geodesic from $v$ to $ghv$ in $X$.  It's image in the plane is the shortest path from $v$ to $ghv$ which does not cross any slit.  This path lies between $L_v$ and  $L_2$.  It follows that the $gh$-translates of this path bend (if at all) toward the slit at $v$ and translates of $v$.  Hence they lift to a $gh$-invariant geodesic in $E$, i.e., an axis. 
\end{proof}

\begin{figure}
\labellist
\small\hair 2pt
\pinlabel{$\gamma$} at 40 28
\pinlabel{$\alpha$} at 100 50
\pinlabel{$\beta$} at 310 68
\pinlabel{$hg\alpha$} at 484 80
\pinlabel{$g\gamma$} at 188 68
\pinlabel{$hg\gamma$} at 382 58
\pinlabel{$y$} at 12 5
\pinlabel{$x$} at 80 7
\pinlabel{$hgy$} at 347 27
\pinlabel{$hgx$} at 415 27
\pinlabel{$gx$} at 162 92
\pinlabel{$gy$} at 230 94
\pinlabel{$ghgx$} at 493 118
\pinlabel{$L_2$} at 205 14
\pinlabel{$L_1$} at  350 110 
\endlabellist
\begin{center}
\includegraphics[width=5.5in]{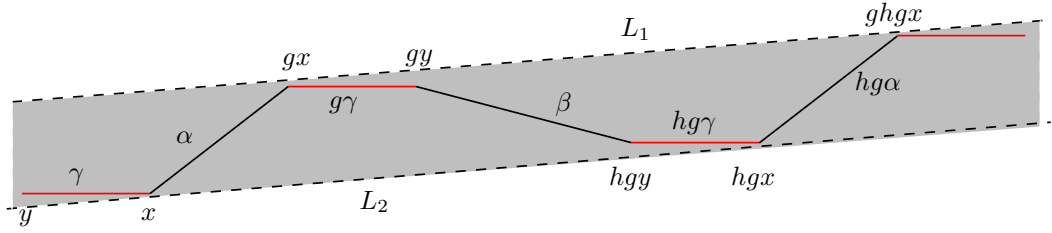}
\end{center}
\caption{The piecewise axis $\phi(g,h)$ developed onto the plane.} 
\label{fig:develop2}
\end{figure}

In the course of the proof we have shown

\begin{theorem}   Let $G,H$ be maximal $\Z^2$-subgroups with minsets $M_G,M_H$
and suppose   $M_G \cap M_H = \emptyset$.  Let $\gamma$ be a spanning geodesic from $M_G$ to $M_H$  with endpoints at vertices.   Then for any $\epsilon >0$, there exists $g \in G,  h \in H$ such that some 
axis for $gh$ intersects $\gamma$ and lies within Hausdorff distance  $\epsilon$ of the piecewise axis
$\phi(g,h)$. 
\end{theorem}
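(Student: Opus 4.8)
The plan is to read the statement off from the proof of Theorem~\ref{distance} with essentially no new construction. For each $\epsilon>0$ that proof already produces $g\in G$, $h\in H$ together with a $gh$-axis lying within Hausdorff distance $\epsilon$ of $\phi(g,h)$, which is precisely the second assertion. So the only thing to add is that the axis can be taken to meet $\gamma$, and I would verify this following the same case division used there, always exploiting the fact that $\gamma$ occurs inside $\phi(g,h)$ as the distinguished segment $\overline\gamma$ (or $\overline\gamma_0$).

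First the easy cases. If $\gamma$ is bendable, the geodesic~(\ref{eqn1}) whose translates form the axis contains $\overline\gamma_0$, a subsegment of $\gamma$, so the axis meets $\gamma$ along all of $\gamma_0$. If $\gamma$ is unbendable with both endpoints of type~1, then $\phi(g,h)$ is itself geodesic and contains $\overline\gamma$, so the axis contains $\gamma$ outright. If $\gamma$ is unbendable with one endpoint of type~2 and the other of type~1, the axis contains the straightened segment $\alpha^*$, one of whose endpoints is an endpoint of $\gamma$; hence the axis meets $\gamma$ there. In each of these cases the intersection is immediate from the explicit description of the axis.

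The substantive case is $\gamma$ unbendable with both endpoints of type~2. Here the axis lies in the $gh$-invariant strip $E$ whose development is the planar strip of width $\le 2\epsilon$ bounded by $L_1$ and $L_2$, and $\gamma\subset\phi(g,h)\subset E$. Since the developed segment $\tilde\gamma$ lies in this strip and has the fixed positive length $d=\mathrm{length}(\gamma)$, it makes angle at most $\arcsin(2\epsilon/d)$ with $L_1,L_2$, so for small $\epsilon$ it runs nearly lengthwise; this is consistent with the fact that the bends of $\phi(g,h)$ at the endpoints of $\gamma$ are within $\epsilon$ of $\pi$ by the choice of $g,h$ in Lemma~\ref{alpha*}. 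When there are no slits, $E$ is a flat Euclidean strip $\E^1\times[0,w]$ and every line $\E^1\times\{t\}$ is a $gh$-axis; taking $t$ to be the cross-strip coordinate of the endpoint $x$ produces an axis through $x\in\gamma$, as desired.

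The main obstacle is the slit subcase, where $E$ is not a metric product and the family of axes $min(gh)\cap E$ may be pinched against a slit, so the particular axis constructed in Theorem~\ref{distance} (the shortest path from $v$ to $ghv$ avoiding slits) need not pass through $\gamma$. To handle it I would exploit the freedom already present in the construction: in Lemma~\ref{alpha*} we may choose the approximating lattice point on either side of $L$, and at a type~2 endpoint we may arrange the bend to point into the slit. Choosing these so that the endpoint of $\gamma$ nearest the relevant bounding line sits on the axis side of every slit, one re-routes the shortest-path construction through that endpoint. The delicate point, which I expect to require the most care, is to make this choice simultaneously consistent at $x$, at $y$, and under all $gh$-translates, so that the resulting $gh$-invariant geodesic is genuinely an axis and still passes through a point of $\gamma$.
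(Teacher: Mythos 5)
Your reduction is the paper's own: this theorem is recorded as a byproduct of the proof of Theorem~\ref{distance}, and your verification of the intersection claim in the bendable case, the unbendable case with two type~1 endpoints, the mixed case (where the axis passes through $y$), and the type~2/type~2 case without slits is correct; in the last of these, taking the lengthwise line of the flat strip through $x$ is a clean variant of the paper's use of the bounding lines $L_1,L_2$. But the slit case, which you rightly call the substantive one, is where the entire content of the intersection claim lives, and there you give a plan with an acknowledged hole rather than a proof: the ``delicate point'' you defer --- making the choices ``simultaneously consistent at $x$, at $y$, and under all $gh$-translates'' --- is the statement that needs proving. (Consistency under $gh$-translates is automatic, since the choices of $g$ and $h$ determine the whole periodic picture; that is not where the difficulty sits.)

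Worse, the mechanism you propose would fail as stated. One cannot simply ``re-route the shortest-path construction through'' the endpoint $x$: a $gh$-invariant path forced through $x$ will in general have a corner there, and a path with a corner at $x$ is locally geodesic in $X$ only if the bend points toward a slit \emph{at $x$}, i.e.\ only if the relevant 4-cycle in $link(x,X)$ is missing an edge on the correct side. Nothing guarantees any slit at $x$: all the slits may occur at vertices in the $\langle gh\rangle$-orbits of $gx$ and $gy$ (this is exactly the configuration in the paper's parenthetical remark, where $v=gx$), and then no choice of sides in Lemma~\ref{alpha*} makes a bent path through $x$ geodesic. That is why the paper anchors its axis at the vertex $v$, where bending toward a slit is guaranteed and no bending is needed elsewhere; the axis then meets $\gamma$ when $v$ can be taken in the orbit of $x$ or $y$, and what still has to be arranged --- by re-choosing the bending directions so that $v$, or a bounding line that lifts to an axis, passes through a translate of $x$ or $y$, or else by exchanging the roles of $G$ and $H$, which converts an axis of $gh$ through $gx$ into an axis of $hg$ through $x$ --- is precisely the point your sketch leaves open. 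Until an argument of this kind is supplied, the crucial case of the theorem remains unproved.
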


For a pair of minsets with $M_G \cap M_H \neq \emptyset$, their intersection is a (possibly degenerate) rectangle or an infinite Euclidean strip.  We will refer to all of these as ``rectangles" of side length $r_1,r_2 \in [0,\infty]$.  The following key fact is an easy consequence of the theorem.

\begin{corollary}\label{shape}  If  $M_G \cap M_H$ is a non-empty rectangle of side lengths $r_1,r_2 \in [0, \infty]$, then the dimensions $r_1,r_2$ are determined by the length function.
If either of the $r_i$ satisfy $r_i >0$,  then the length function also determines which gridline isometries in $G$ and $H$ act in the same direction along this side of the rectangle.
\end{corollary}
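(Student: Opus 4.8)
The plan is to recover each finite side length $r_i$ as a distance between $M_H$ and a suitable $G$-translate of it, and then to read off relative orientations along a side from additivity of the length function. First I would dispose of the infinite case: by Lemmas \ref{compact} and \ref{branch}, $R := M_G \cap M_H$ is an infinite strip (so some $r_i = \infty$) exactly when $G \cap H = \langle h \rangle$ with $h \neq 1$, and this is visible from the algebra of $G,H$ alone, independently of $X$; the infinite side is the direction of the common gridline isometry $h$. So it remains to recover the finite side lengths and, where they are positive, the orientation data.

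For a grid direction $e_i$ in which $R$ has finite length $r_i$, I would pick a gridline isometry $g \in G$ parallel to $e_i$ and replace $H$ by its conjugate $H' = gHg^{-1}$, whose minset is $M_{H'} = gM_H$. Since $g$ preserves $M_G$ and translates it by $l(g)\,e_i$, equivariance of minsets gives $M_{H'} \cap M_G = g(M_H \cap M_G) = gR = R + l(g)\,e_i$. Thus $M_H$ and $M_{H'}$ meet $M_G$ in the two parallel rectangles $R$ and $R + l(g)e_i$, which lie at distance $l(g) - r_i$ inside the flat $M_G$. Taking $g$ to be a large enough power of a gridline isometry we may assume $l(g) > r_i$, so these rectangles are disjoint and I claim
\[
d(M_H, M_{H'}) = l(g) - r_i > 0 .
\]
Granting this, $M_H \cap M_{H'} = \emptyset$, and applying Theorem \ref{distance}(2) to the pair of maximal $\Z^2$-subgroups $H, H'$ (using $l(gh_2g^{-1}) = l(h_2)$) yields
\[
2\big(l(g) - r_i\big) = \sup_{h_1,h_2 \in H} \big( l(h_1\, g h_2 g^{-1}) - l(h_1) - l(h_2) \big),
\]
whose right-hand side involves only values of the length function. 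Solving for $r_i$ expresses it through the length function, and the same formula covers the degenerate case $r_i = 0$ (where $gR$ sits at full distance $l(g)$ from $R$). Carrying this out for each of the two gridline directions of $G$ recovers both side lengths, while $r_i = \infty$ is precisely the complementary case in which no $G$-translate disjoins $M_H$ from $M_{H'}$.

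For the orientation statement, suppose the side in direction $e_i$ has $r_i > 0$. This positivity is exactly what makes $e_i$ a genuinely shared grid direction of $M_G$ and $M_H$, so it is meaningful to compare orientations of $e_i$-gridline isometries $g \in G$ and $h \in H$. Because $r_i > 0$, $R$ contains a nondegenerate segment $I$ in direction $e_i$ lying simultaneously on a $g$-axis and on an $h$-axis. If $g$ and $h$ translate the same way along $e_i$, the straight line through $I$ develops into a genuine $gh$-axis and $l(gh) = l(g) + l(h)$; if they translate oppositely, a base point of $I$ can be moved by $gh$ a distance strictly less than $l(g) + l(h)$, using the room provided by $I$, so $l(gh) < l(g) + l(h)$. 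Combined with Theorem \ref{distance}(1), this shows that $g$ and $h$ act in the same direction if and only if $l(gh) = l(g) + l(h)$, a condition read off from the length function.

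The main obstacle is the distance identity $d(M_H, M_{H'}) = l(g) - r_i$. The bound $\leq$ is immediate from the segment inside $M_G$ joining $R$ to $R + l(g)e_i$. For the reverse inequality I would use the walls of $X$: the edges of $M_G$ lying in the gap between $R$ and $gR$ determine a family of walls whose widths sum to $l(g) - r_i$, and I would show each such wall separates $M_H$ from $M_{H'}$, since each flat leaves $M_G$ along the boundary of its own overlap rectangle and hence stays on the near side of every gap wall. Any path from $M_H$ to $M_{H'}$ must then cross all of these walls, giving the lower bound. A parallel care is needed for the strict inequality in the opposite-orientation case when the gridline translations exceed $r_i$; there I would reduce to minimal gridline generators, for which a point of $I$ remains inside $R$ after one application of $h$ so that the shortcut is explicit, and then propagate the orientation as an invariant of the gridline subgroups of $G$ and $H$.
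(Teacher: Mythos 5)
Your core mechanism for the finite side lengths is sound and is in fact the paper's own: with $g$ a gridline isometry in direction $e_i$ and $l(g)>r_i$, one has $d(M_H,gM_H)=l(g)-r_i$, and this distance is a length-function quantity by Theorem \ref{distance}(2) applied to $H$ and $gHg^{-1}$. But there is a genuine gap at your very first step, where you ``pick a gridline isometry $g\in G$ parallel to $e_i$.'' Nothing in your argument shows that the set of gridline isometries of $G$, or which of the two grid directions a given one translates along, is itself determined by the length function; you are selecting $g$ using knowledge of the geometry of $X$. This matters because ``determined by the length function'' means that two actions with identical length functions must produce identical data, so every choice in the recovery procedure must be certified by length-function values and group structure alone --- being a gridline isometry is not an algebraic property of an element of $G\cong\Z^2$. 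This identification is exactly the heart of the paper's proof: for compact $R$ it observes that $l(g)+d(R,gR)\leq d(R,g^2R)$, with equality if and only if the axis of $g$ passes through two corners of $R$, that there are at most four parallel classes of such axes, and that the two gridline classes are those for which $l(g)-d(R,gR)$ is smallest; this simultaneously identifies the gridline elements and the side lengths. Your proposal skips this step, and both your side-length formula and your orientation criterion require it as input (the same issue recurs in the infinite-strip case, where you need a gridline isometry orthogonal to the common element $h$; the paper gets this from $l(g),l(h),l(g^{-1}h)$ by the Pythagorean relation in the flat).

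Two further remarks. First, the identity you call the main obstacle, $d(M_H,gM_H)=l(g)-r_i$, needs no wall-counting argument: it is immediate from Lemma \ref{spanning} with $C_1=M_H$, $C_2=gM_H$, $C_3=M_G$, since $R$ and $gR$ are parallel rectangles in the flat $M_G$ at distance $l(g)-r_i$; your separation claim (``each flat stays on the near side of every gap wall'') is unproved and is precisely the kind of assertion Lemma \ref{spanning} exists to circumvent. Second, your orientation criterion --- gridline isometries $g\in G$, $h\in H$ along a side of positive length translate the same way if and only if $l(gh)=l(g)+l(h)$ --- is correct and is genuinely different from, and cleaner than, the paper's route, which compares $d(gM_H,hM_G)$ with $d(gM_H,h^{-1}M_G)$ and needs an extra condition involving an orthogonal isometry $k$ when $r_1=r_2$. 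But as written it is only asserted: the claim that the line through $I$ ``develops into a genuine $gh$-axis'' requires checking that the concatenation of $[x,gx]\subset M_G$ with $g[x,hx]\subset gM_H$ is locally geodesic at the breakpoints, and this is exactly where $r_i>0$ enters (the two segments share the nondegenerate subsegment $gI$ of a common axis, so the incoming and outgoing directions are antipodal on a geodesic and hence at link distance at least $\pi$); in the opposite-orientation case the same concatenation backtracks along $gI$, giving $l(gh)\leq l(g)+l(h)-2\min(r_i,l(g),l(h))<l(g)+l(h)$. With those verifications supplied, and with the gridline identification imported from the paper, your argument would go through.
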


\begin{proof} Let $R=M_G \cap M_H$.  First note that $R$ is a single point if and only if for every $g \in G$, $l(g)=d(M_H,gM_H)$.   By Theorem \ref{distance}, the right hand side  is determined by the length function.

So assume from now on that at least one of the $r_i$ is non-zero.  In this case, we claim that the length function determines which elements $g \in G$ act as gridline isometries.  If $R$ is an infinite strip, then $G \cap H = \langle h \rangle$ and $h$ is a gridline isometry in both minsets. The orthogonal gridline isometries $g$ are then determined by the lengths of $l(g), l(h), l(g^{-1}h)$.   

Now assume $R$ is compact, and note that for any $g \in G$ with $R \cap gR = \emptyset$, 
$d(R,gR)=d(M_H,gM_H)$ is determined by the length function.  Next note that 
$$l(g) + d(R,gR) \leq d(R,g^2R)$$
with equality holding if and only if the axis for $g$ passes through two corners of $R$
(see Figure~\ref{fig:corners}), and in this case, the distance between the two corners is $l(g) - d(R,gR)$.
There are at most 4 (parallel classes of) such axes in $M_G$, two of which are along gridlines, namely the two for which 
$l(g) - d(R,gR)$  is smallest.  It follows that the length function determines which elements of $G$ act along gridlines as well as the side lengths of $R$ along these gridlines.
The same holds for $H$.  

If $r_1 \neq r_2$, this determines the identification of the rectangle $R$
 in $M_G$ with the rectangle $R$ in $M_H$ up to reflection.
 To obtain the correct orientation, consider gridline isometries $g \in G$ and $h\in H$ which translate along a side of positive length length $r= r_1$ or $r_2$.  If $h$ translates in the same direction as $g$ then 
\begin{align*} 
d(gM_H,hM_G) &=  d(gR,hR) = d(gR,R) + d(R, hR) = l(g)+l(h)-2r\\
d(gM_H,h^{-1}M_G) &= d(gR,h^{-1}R) = d(gR,R) + d(R, h^{-1}R) +r = l(g)+l(h)-r
\end{align*}
so we can distinguish between the directions $h$ and $h^{-1}$ relative to $g$.  

Finally,  if $r_1=r_2$, then the gridline isometries $h$ and $g$ act in the same direction if and only if the above two equations are satisfied and, in addition, for a gridline isometry $k \in H$ orthogonal to $h$, 
$$
d(gM_H,k^{\pm 1}M_G) \leq d(gR,k^{\pm 1}R)  \leq d(gR,R) + d(R, k^{\pm 1}R) = l(g)+l(k)-2r
$$
In particular, we must have
$d(gM_H,h^{-1}M_G) > d(gM_H,hM_G) \geq d(gM_H,k^{\pm 1}M_G)$.
This completes the proof of the second statement.
\end{proof}

\begin{figure}
\begin{center}
\includegraphics[width=2.8in]{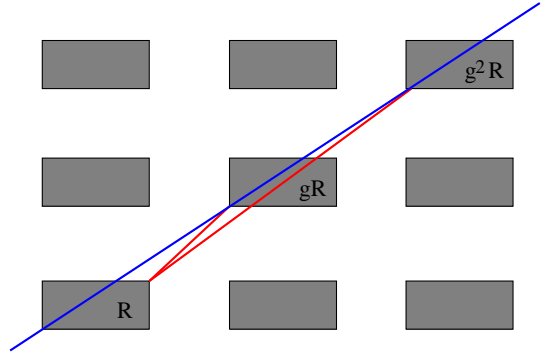}
\end{center}
\caption{The orbit of $R$ in $M_G$} 
\label{fig:corners}
\end{figure}


\section{Main theorem}

We are now ready to prove our main theorem.  

\begin{theorem}\label{main}   Assume $\G$ has no triangles and no vertices of valence 0.  Let $X$ and $X'$ be 2-dimensional CAT(0) rectangle complexes with minimal actions of $\AG$.  If the length functions associated to the two actions are the same, then $X$ and $X'$ are equivariantly isometric.
\end{theorem}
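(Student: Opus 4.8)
The plan is to reconstruct $X$ up to equivariant isometry from data that the preceding sections show is determined by the length function, and then to observe that the same recipe applied to $X'$ produces the identical object. Throughout I would work with the cover of $X$ by the minsets $M_H$ of its maximal $\Z^2$-subgroups $H$, which covers $X$ by minimality. The indexing set of such subgroups, together with the $\AG$-conjugation action and the lattice of their intersections, is purely group-theoretic and hence the same for $X$ and $X'$. For each $H$, both $M_H\subset X$ and $M'_H\subset X'$ are flats isometric to $\E^2$ on which $H\iso\Z^2$ acts by two orthogonal translations of lengths $l(h_1),l(h_2)$ (the translation lengths of basic gridline generators); since the length functions agree, there is an $H$-equivariant isometry $\phi_H\colon M_H\to M'_H$, unique up to the finite symmetry group of this flat.

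The heart of the argument is to choose the $\phi_H$ compatibly on overlaps. For an intersecting pair, Corollary~\ref{shape} determines the dimensions of the rectangle $R_{GH}=M_G\cap M_H$ and, for each side of positive length, which gridline isometries of $G$ and $H$ run along it in a common direction. To \emph{locate} $R_{GH}$ inside the flat $M_H$ rather than merely record its shape, I would use Lemma~\ref{spanning}: whenever $M_G$ and $M_{G'}$ both meet $M_H$ with $M_G\cap M_{G'}\cap M_H=\emptyset$, a shortest geodesic in $M_H$ from $R_{GH}$ to $R_{G'H}$ is a spanning geodesic for $M_G,M_{G'}$, so its length equals $d(M_G,M_{G'})$, which is determined by Theorem~\ref{distance}. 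These inter-rectangle distances, together with the direction data from Corollary~\ref{shape}, pin down the whole configuration of intersection rectangles inside $M_H$ up to the flat's symmetry, and the identical configuration appears in $M'_H$. Hence the $\phi_H$ can be chosen so that $\phi_G$ and $\phi_H$ restrict to the same map $R_{GH}\to R'_{GH}$.

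The step I expect to be the main obstacle is the global consistency of these local matchings. A single-point overlap carries no rigidity, since an isometry of $\E^2$ is not determined by one point, so the matchings cannot simply be propagated through arbitrary overlaps. This is exactly where corners are needed: at a corner several minsets meet in positive-dimensional rectangles, and a nondegenerate rectangle determines an isometry of $\E^2$, so the identification is rigid once it is fixed on corner-bearing flats. Lemma~\ref{nocorners} guarantees that this suffices, since every minset lies in a union of minsets with corners. Concretely, I would fix $\phi_{H_0}$ on one flat, propagate it along chains of overlapping flats, and prove well-definedness (absence of monodromy) from the simple-connectivity of the nerve of the cover $\{M_H\}$, which follows from a nerve argument because the $M_H$ are convex, with contractible intersections, in the contractible space $X$.

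Finally, the resulting compatible family $\{\phi_H\}$ glues to an $\AG$-equivariant map $\phi\colon X\to X'$; equivariance is arranged by making the choices on $\AG$-orbit representatives, of which there are finitely many since minimality forces cocompactness. Because $X=\bigcup_H M_H$ and $X'=\bigcup_H M'_H$ carry the induced length metrics, $\phi$ is a surjective local isometry between CAT(0) spaces and is therefore an equivariant isometry, as required.
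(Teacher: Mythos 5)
Your overall plan --- reconstruct each flat-to-flat isometry $\phi_H$ from the length-function data of Corollary~\ref{shape}, Lemma~\ref{spanning} and Theorem~\ref{distance}, restrict attention to corner-bearing minsets via Lemma~\ref{nocorners}, glue, and finish with a CAT(0) local-to-global argument --- is the same skeleton as the paper's proof. But there are two genuine gaps.

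First, you never treat the case in which $\G$ is the star of a single vertex. In that case $X = \E^1 \times T$ and \emph{no} minset in $X$ has a corner (all branching is along axes of the central vertex), so Lemma~\ref{nocorners} is vacuous --- indeed its hypothesis explicitly excludes this case --- and your corner-based gluing has nothing to work with. The obstruction is real: each flat $\E^1 \times \alpha(g)$ meets every other flat in strips parallel to the $\E^1$ factor, so the configuration of intersection rectangles is invariant under all translations in that direction and does not pin down $\phi_H$. The paper disposes of this case separately, by showing the length function determines both the homomorphism $\lambda\colon F \to \R$ giving the $\E^1$-coordinate of the action and the length function of $F$ on $T$, and then invoking Culler--Morgan for the tree factor. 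Your proof is incomplete without some such argument.

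Second, your mechanism for global consistency is flawed in two places. You assert that an $H$-equivariant isometry $M_H \to M'_H$ is ``unique up to the finite symmetry group of this flat''; in fact the ambiguity is a continuum, since every translation of $\E^2$ commutes with the $H$-action, and this is precisely the ambiguity that must be killed. You then propose to kill it by propagating a choice along chains of overlaps, claiming that at a corner ``several minsets meet in positive-dimensional rectangles, and a nondegenerate rectangle determines an isometry of $\E^2$.'' That claim is false: a corner only means branching along two orthogonal gridlines, and the pairwise intersections there can perfectly well be segments or single points (three flats meeting pairwise in the two coordinate axes of $M_H$ give a corner at the origin with no two-dimensional overlap anywhere), so propagation across a single overlap is not rigid --- a segment overlap leaves a reflection ambiguity, a point overlap leaves everything free. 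The monodromy/nerve argument built on top of this is therefore not just technically delicate (the nerve lemma for a closed cover needs justification) but resting on a false rigidity premise, and it is also unnecessary. The paper's resolution is that when $M$ has corners, the isometry $\phi_H$ is \emph{canonically unique}: $H$-equivariance plus the requirement that it carry the \emph{entire} family $\{R_i\}$ of intersection rectangles to $\{R'_i\}$ (with the direction identifications of Corollary~\ref{shape}) fixes the translational ambiguity --- one compact $R_i$, or strips in both gridline directions, suffices --- and the positions of all the remaining $R_j$ are then forced by Lemma~\ref{spanning}. With each $\phi_H$ unique, agreement on overlaps and $\AG$-equivariance of the glued map come for free, with no choices to propagate and no monodromy to exclude. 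Finally, your last step (``surjective local isometry, hence isometry'') needs the intermediate observation, made in the paper, that the maps on links fit together into graph isomorphisms $link(x,X) \to link(\Phi(x),X')$; being an isometry on each flat of a cover does not by itself give a local isometry at points where flats meet.
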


\begin{proof} 
If $\G$ is the star of a single vertex $v$, then $\AG \cong \Z \times F$ with $\Z=\langle v\rangle $ and $F$ is the free group generated the remaining vertices.  In this case, $X = min(v)= \E^1 \times T$ where the first factor is an axis for $v$ and the second factor $T$ is a tree.  It follows from the Flat Torus Theorem  (\cite{BH}, Thm II.7.1) that the action of $F$ on $X$ descends to an action $\rho$ of $F$ on $T$ such that for $(r,t) \in X$, $g \in F$,
$$ g \cdot (r,t) = (r+\lambda(g),\, \rho(g)(t))$$
where $\lambda : F \to \R$ is some homomorphism.  
It is easy to see that the minimality of $\AG$ acting on $X$ is equivalent to the minimality of $F$ acting on $T$, so by Culler and Morgan \cite{CuMo}, it suffices to show that the length function of $\AG$ acting on $X$ determines both the homomorphism $\lambda$ and the length function $l_\rho$ of $F$ acting on $T$.  For this, fix $g \in F$ and consider the action of $H=\langle v,g\rangle $ on its minset $M$.    Pick a basepoint $x_0 \in M$ and identify  $M = \E^1 \times \alpha(g)= \R^2$ with $x_0$ as the origin.  Consider the triangle  formed by $x_0, vx_0, gx_0$.  Note that the coordinates of $gx_0$ in  $\R^2$   are precisely $(\lambda(g), l_{\rho}(g))$.  This triangle is uniquely determined  by the three lengths $l(v),l(g),l(gv)$ (and the fact that $v$ translates in a positive direction along the first factor), hence so are the coordinates of $gx_0$.  

Now assume that $\G$ is not the star of a single vertex.
Fix a maximal $\Z^2$-subgroup $H$ and let $M, M'$ be the minsets of $H$ in $X$ and $X'$ respectively.  
For any other minset $M_i$ intersecting $M$, let $R_i = M \cap M_i$ and $R'_i=M' \cap M'_i$.

We first prove that if $M$ has corners, then there is a unique isometry $\phi_H : M \to M'$ such that $\phi_H$ maps $R_i$ onto $R'_i$ for all $i$.
If some $R_i$ is compact, then
by Corollary \ref{shape} and Theorem \ref{distance}, the shape of $R_i$ and the distances between its $H$-translates are determined by the length function. So there is a unique $H$-equivariant isometry  $\phi_H$ from $M$ to $M'$ that identifies $R_i$ with $R'_i$.
For any other rectangle (or strip) $R_j$, the position of   $R_j$ in $M$ is uniquely determined by its distance from the grid of rectangles formed by the $H$-translates of $R_i$.  By Lemma \ref{spanning},
the distance from $R_j$ to such a translate $hR_i$ is equal to the distance from $M_j$ to $hM_i$ so it is determined by the length function.  It follows that $\phi_H$ takes $R_j$ to $R'_j$ for all $j$.  

If $R_i$ is an infinite strip for every $M_i$, then the same is true for $R'_i$ by Lemma \ref{compact}.  Since we are assuming that $M$ has corners, there exist such strips in both gridline  directions.  By Corollary \ref{shape} and Theorem \ref{distance}, the direction (horizontal or vertical) of these strips, their widths, and the distances between them are all determined by the length function hence there is a unique $H$-equivariant isometry  $\phi_H$ from $M$ to $M'$ that identifies each $R_i $ to $R_i'$. 

We have defined an isometry $\phi_H : M \to M'$ for every $\Z^2$-minset with corners, such that all of these maps agree on overlaps. By Lemma \ref{nocorners}, these minsets cover $X$, thus the $\phi_H$ fit together to give a map $\Phi: X \to X'$. 
The uniqueness of $\phi_H$ implies that the combined map $\Phi$ is $\AG$-equivariant.  
Moreover, the induced maps $link(x, M) \to link(\phi_H(x), M')$ likewise fit together to give an isomorphism of graphs $link(x,X) \to link(\Phi(x), X')$.   It follows that $\Phi$ takes local geodesics 
to local geodesics.  Since $X$ and $X'$ are CAT(0), we conclude that $\Phi$ is a global isometry.  
This completes the proof of the theorem.
\end{proof}


\begin{thebibliography}{99}


\bibitem{BH}
{M. Bridson and A. Haefliger}, Metric Spaces of Non-positive Curvature,
 Grundlehren der Mathematischen Wissenschaften 319,  Springer-Verlag, Berlin, 1999. 
 
\bibitem{BCV09} Kai-Uwe Bux, Ruth Charney and Karen Vogtmann, \emph{Automorphisms of two-dimensional RAAGs and partially symmetric automorphisms of free groups}, Groups Geom. Dyn. \textbf{3}  (2009) no. 4, 541--554.
  
\bibitem{Ch07} Ruth Charney, \emph{An introduction to right-angled Artin groups}, Geom. Dedicata  \textbf{125} (2007) 141--158

\bibitem{CCV07}
Ruth Charney, John Crisp and Karen Vogtmann, \emph{Automorphisms of 2-dimensional right-angled Artin groups},  Geom. and Topology \textbf{11} (2007), 2227--2264.


\bibitem{CV08}
Ruth Charney and Karen Vogtmann,  
\emph{Finiteness properties of automorphism groups of right-angled Artin groups}, Bull. Lond. Math. Soc.  \textbf{41}  (2009),  no. 1, 94--102. 

\bibitem{CV11}
Ruth Charney and Karen Vogtmann,  
\emph{ Subgroups and quotients of automorphism groups of RAAGS},
to appear in Proceedings of the 2008 Georgia Topology Conference, 
Geom. and Topology Monographs.

 \bibitem{CoLu}
M.~Cohen, and M.~Lustig, Very small group actions on R-trees and Dehn twist automorphisms, \emph{Topology} {\bf 34} (3) (1995), 575--617.

 \bibitem{CuMo}
 M.~Culler and J.~Morgan, Group actions on R-trees, 
 \emph{Proc. Lond. Math. Soc.} {\bf 55} (1987), 
571--604. 
 
 
\bibitem{CuVo}
Marc Culler and Karen Vogtmann,
 \emph{Moduli of graphs and automorphisms of
  free groups}, Invent. Math. \textbf{84} (1986), no.~1, 91--119.  

 
 \bibitem{Mar}
 M.~Margolis, Length functions of right-angled Artin groups, 
 PhD thesis, Brandeis University, 2010

\bibitem{Ser}
H.~Servatius, Automorphisms of graph groups,
\emph{J. Algebra}  {\bf 126} (1989), 34--60.

\end{thebibliography}
\end{document}